\documentclass[11pt,reqno]{amsart}

\usepackage[T1]{fontenc}
\usepackage{amsmath,amssymb,amsthm,mathtools}
\usepackage{booktabs}
\usepackage{microtype}
\usepackage{xcolor}
\usepackage[colorlinks=true,linkcolor=blue,citecolor=blue,urlcolor=blue]{hyperref}
\usepackage[capitalize]{cleveref}

\usepackage{thmtools}
\declaretheorem[numberwithin=section]{theorem}
\declaretheorem[sibling=theorem]{proposition}
\declaretheorem[sibling=theorem]{lemma}
\declaretheorem[sibling=theorem]{corollary}
\declaretheorem[sibling=theorem,style=remark]{remark}

\newcommand{\R}{\mathbb{R}}
\newcommand{\N}{\mathbb{N}}
\newcommand{\Z}{\mathbb{Z}}
\newcommand{\PP}{\mathbb{P}}
\newcommand{\EE}{\mathbb{E}}
\newcommand{\abs}[1]{\left|#1\right|}
\newcommand{\norm}[1]{\left\|#1\right\|}
\newcommand{\Mel}{\mathcal{M}}
\newcommand{\eqd}{\stackrel{d}{=}}
\DeclareMathOperator{\Bin}{Bin}
\DeclareMathOperator{\Ber}{Bernoulli}
\DeclareMathOperator{\supp}{supp}

\title[Power-law and log-periodic degree tails in evolving networks]{Power-law and log-periodic
degree tails for a family of probability generating function equations arising in evolving networks}
\author[Qunqiang Feng]{Qunqiang Feng}
\address{School of Management, University of Science and Technology of China, Hefei, Anhui 230026, P.R.\ China}
\email{fengqq@ustc.edu.cn}
\author[Xiao-Ming Fu]{Xiao-Ming Fu}
\address{School of Mathematical Sciences, University of Science and Technology of China, Hefei, Anhui 230026, P.R.\ China}
\email{fuxm@ustc.edu.cn}
\author[Tianyang Sun]{Tianyang Sun}
\address{School of Mathematical Sciences, University of Science and Technology of China, Hefei, Anhui 230026, P.R.\ China}
\email{tysun@mail.ustc.edu.cn}
\date{July 2026}
\subjclass[2020]{Primary 60J80, 60E10; Secondary 60F05, 39B12, 05C82}
\keywords{Probability generating function, functional equation, Galton-Watson process,
Schr\"oder-type branching process, local limit theorem, Mellin transform, power-law
tail, log-periodic oscillation, homogeneous evolving network, duplication-divergence graph}

\begin{document}

\begin{abstract}
For a fixed integer $j\ge1$ and $0<p<1$, we study the probability generating function (pgf) equation
\[
  (1+2p)\,g(x)=2p\,x^{j}+g\bigl(x-px+px^{2}\bigr),\qquad 0\le x\le1 ,
\]
which governs the limiting degree distribution $\{p_k\}$ of a family of evolving network models. 
The
cases $j=1$ and $j=2$ are the treelike fast-growth model of Feng and Hu and the homogeneous evolving
network of Feng, Li and Hu. We prove that for every $j$ the equation has a unique pgf solution, of
mean $2j$, and we determine its coefficient tail exactly:
\[
  p_k=k^{-1-\rho}\,\Psi_j(\log_\lambda k)+o\bigl(k^{-1-\rho}\bigr),
\]
where $\lambda=1+p$, $\rho=\log(1+2p)/\log(1+p)$ is independent of $j$, and $\Psi_j$ is continuous,
strictly positive and $1$-periodic, with explicit Fourier coefficients. 
This resolves two conjectures of Feng and coauthors: (1) the power-law order $p_k=\Theta(k^{-1-\rho})$ and (2) its refinement to the multiplicatively periodic form $p_k\sim\Psi_j(\log_\lambda k)\,k^{-1-\rho}$. 
The periodic factor is
genuinely non-constant for $p$ near $1$, and, for the two network models, for all $p$ outside a discrete set. 
Consequently, $p_k$ is asymptotic to no constant multiple of
$k^{-1-\rho}$. 
Our method is a
self-contained local analysis of the supercritical Galton-Watson process with offspring law
$1+\Ber(p)$, inspected at an independent geometric time. 
This time-changed process solves the equation observed by Feng and coauthors. The main results of this paper were obtained by
the multi-agent system Eureka and have subsequently been verified by the authors.
\end{abstract}

\maketitle

\section{Introduction}
\label{sec:intro}

Many large networks arising in nature, society and technology are \emph{scale-free}.
Examples include the Internet, the World Wide Web, and a range of citation, social, metabolic and protein-interaction networks. 
Their degree distributions $\{p_k\}$ obey a power law, $p_k$
decaying like $k^{-\gamma}$, with the exponent $\gamma$ empirically clustering in the range $(2,3)$
\cite{BA99,FFF99,AB02,Newman03,DM02,Hofstad17}. 
Since the work of Barab\'asi and Albert \cite{BA99}, this ubiquity has been explained through growing-network models in which power-law degrees emerge dynamically, for instance by preferential attachment. A substantial mathematical literature now
establishes heavy-tailed limiting degree distributions for such models. 
A basic question for any
given model is to pin down the \emph{precise} order of its degree tail: beyond knowing that the
tail is heavy, one wants the value of the exponent and any finer structure.

For many such models, the limiting degree law is specified only implicitly, through a
functional equation for its probability generating function (pgf), whose coefficients are the degree probabilities themselves and have no closed form. 
Extracting the tail is then a nontrivial analytic
problem, and the leading power law is often modulated by log-periodic oscillations. 
Two evolving network models of Feng and coauthors~\cite{FH17,FLH23}, whose degree pgfs satisfy such an equation, motivate this
paper. 

The \emph{homogeneous
evolving network} $\mathrm{EN}(n)$ of Feng, Li and Hu \cite{FLH23} is a randomized version of the
pseudofractal graph of Dorogovtsev, Goltsev and Mendes \cite{DGM}. It starts at time $0$ from a single edge. At each step, independently with probability $p\in(0,1)$, every edge of the current
graph recruits a new node joined to both of its endpoints. In the earlier treelike
``fast-growth'' preferential-attachment model of Feng and Hu \cite{FH17}, a tracked vertex instead
begins with a single edge. In both models the empirical degree distribution converges almost surely
to a law $\{p_k\}$ whose pgf satisfies a functional equation. Writing
$q=1-p$, these are the cases $j=2$ and $j=1$ of
\begin{equation}\label{eq:main}
  (1+2p)\,g(x)=2p\,x^{j}+g\bigl(x-px+px^{2}\bigr),\qquad 0\le x\le1,
\end{equation}
where $j\ge1$ is an integer and $x-px+px^{2}=px^{2}+qx$ (equation~(24) of \cite{FLH23} for $j=2$ and the case $j=1$ reads $(1+2p)g(x)=2px+g(px^{2}+qx)$). We study \eqref{eq:main} for every integer
$j\ge1$. As explained in \Cref{sec:rep}, general $j$ corresponds to a tracked vertex of initial
degree $j$. Its solution is the pgf
\begin{equation}\label{eq:pgf}
  g(x)=\sum_{k=j}^{\infty}p_k x^{k},\qquad p_k=\PP(X=k)>0,\quad \sum_{k\ge j}p_k=1,
\end{equation}
of an integer-valued random variable $X$ supported on $\{j,j+1,\dots\}$, with mean $\EE X=2j$.

The question raised in \cite{FH17,FLH23} is the behaviour of the tail as $k\to\infty$. On
the basis of simulations, Feng, Li and Hu
\cite{FLH23} conjectured a power-law order $p_k=\Theta\bigl(k^{-1-\rho}\bigr)$ with
\begin{equation}\label{eq:rhodef}
  \rho=\frac{\log(1+2p)}{\log(1+p)},
\end{equation}
their log-log degree plots oscillating between two parallel lines rather than following a single
straight line. Feng and Hu \cite{FH17} conjectured, more precisely, that
$p_k\sim\omega(k)\,k^{-1-\rho}$ for some continuous, strictly positive, multiplicatively periodic
$\omega$. Both were left open.

We prove both conjectures, for every $j$. Our main result (\Cref{thm:sharpmain}) is the sharp
asymptotic
\[
  p_k=k^{-1-\rho}\,\Psi_j(\log_\lambda k)+o\bigl(k^{-1-\rho}\bigr),\qquad \lambda=1+p,
\]
with $\Psi_j$ continuous, strictly positive and $1$-periodic, with explicitly computed Fourier
coefficients. As a continuous positive periodic function is bounded away from $0$ and $\infty$, this
contains the two-sided bound $p_k=\Theta(k^{-1-\rho})$ (\Cref{cor:theta}), proving the conjecture of
\cite{FLH23} at $j=2$ and extending it to the whole family --- the exponent $\rho$ being
\emph{independent of $j$}. Writing $\omega(k)=\Psi_j(\log_\lambda k)$ recovers the form conjectured
by \cite{FH17}. Moreover, $\Psi_j$ is genuinely non-constant for $p$ near $1$ and, for the two
network models, for all $p$ outside a discrete set (\Cref{cor:osc}); therefore, the tail is not
asymptotic to any constant multiple of $k^{-1-\rho}$. This is the log-periodic oscillation observed
numerically in \cite{FLH23}.

Our approach is probabilistic. As Feng and coauthors observed, \eqref{eq:main} is solved by a
supercritical Galton-Watson process with offspring law $1+\Ber(p)$, inspected at an independent
geometric time (\Cref{sec:rep}). 
The tail of $p_k$ is then extracted from the local probabilities
of this branching process by a uniform local limit theorem. The exponent
$\rho$ of \eqref{eq:rhodef} is the log-ratio of two rates governing the process: the geometric
mixing rate and the growth rate.
Functional equations of this type recur across duplication and
duplication-divergence random graphs (there, however, the degree-tail exponent is typically the root of a transcendental equation rather than the clean log-ratio~\eqref{eq:rhodef}); see Ispolatov,
Krapivsky and Yuryev \cite{IKY}, Hermann and Pfaffelhuber \cite{HP}, Jordan \cite{Jordan}, Jacquet,
Turowski and Szpankowski \cite{JTS}, and Turowski and Szpankowski \cite{TS}.

\Cref{sec:results} states the results precisely and outlines the proof strategy. \Cref{sec:rep} recalls the branching-process
representation. \Cref{sec:local} proves the local estimates, \Cref{sec:ground} the uniform local limit
theorem, and \Cref{sec:sharp} the sharp asymptotic --- hence
\Cref{thm:sharpmain,cor:theta}. \Cref{sec:const} analyses when $\Psi_j$ is genuinely oscillatory
and establishes \Cref{cor:osc}.

\section{Main results}
\label{sec:results}

Throughout, $j\ge1$ is a fixed integer and $0<p<1$. We write $q=1-p$, $\lambda=1+p$, $a=1+2p$, and
$\rho$ as in \eqref{eq:rhodef}, so that $a=\lambda^{\rho}$. Since $\lambda<a<\lambda^{2}$ (because
$a-\lambda=p>0$ and $\lambda^{2}-a=p^{2}>0$), one has $1<\rho<2$. In fact $\rho(p)$ is strictly
decreasing on $(0,1)$,%
\footnote{$\rho'(p)$ has the sign of $2(1+p)\log(1+p)-(1+2p)\log(1+2p)$, which is negative on
$(0,1)$: as $t\mapsto t\log t$ is convex and $(1+2p,1)$ majorizes $(1+p,1+p)$ (equal sums), Karamata's
inequality \cite{HLP} gives $(1+2p)\log(1+2p)\ge2(1+p)\log(1+p)$.}
with $\rho\to2$ as $p\to0^{+}$ and $\rho\to\log_{2}3$ as $p\to1^{-}$, so $\rho$ ranges exactly over
$(\log_{2}3,2)$ and the tail exponent $1+\rho$ over $(\log_{2}6,3)\subset(2,3)$, within the range
observed empirically for scale-free networks. Constants $C,c,\dots$ depend
only on $p$ and $j$ and may change between occurrences. $u_k=\Theta(v_k)$ means
$c\,v_k\le u_k\le C\,v_k$ for all large $k$.

The starting point, recalled from \cite{FH17,FLH23} in \Cref{sec:rep}, is that \eqref{eq:main} is
solved by the law of a geometrically time-changed branching process: if $(D^*_n)_{n\ge0}$ is the
Galton-Watson process with offspring law $1+\Ber(p)$ started from $j$ ancestors and $Y$ is an
independent geometric time, $\PP(Y=n)=\tfrac{2p}{a}a^{-n}$, then $X\eqd D^*_Y$, the equation has a
unique pgf solution, and
\begin{equation}\label{eq:mix}
  p_k=\sum_{n\ge0}\frac{2p}{a}\,a^{-n}\,\PP(D^*_n=k),\qquad k\ge j
\end{equation}
(\Cref{thm:rep}). Our results describe the tail of this mixture.

\begin{theorem}[Sharp asymptotics]\label{thm:sharpmain}
For every integer $j\ge1$ there is a continuous, strictly positive, $1$-periodic function $\Psi_j$,
with the explicit Fourier coefficients \eqref{eq:fourierintro}, such that
\begin{equation}\label{eq:sharpmain}
  p_k=k^{-1-\rho}\,\Psi_j(\log_\lambda k)+o\bigl(k^{-1-\rho}\bigr),\qquad k\to\infty .
\end{equation}
\end{theorem}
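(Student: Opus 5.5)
We start from the mixture representation \eqref{eq:mix}, which we take as given (\Cref{thm:rep}). The Galton-Watson process $D^*_n$ has offspring law $1+\Ber(p)$ and mean $\lambda$. Since every individual has at least one child, there is no extinction, and $W_n=D^*_n/\lambda^n\to W$ almost surely and in $L^2$. The limit $W$ is the $j$-fold convolution of the single-ancestor limit $W^{(1)}$, which satisfies
\[
W^{(1)}\eqd \lambda^{-1}\bigl(W'+\xi W''\bigr),\qquad \xi\sim\Ber(p).
\]
The minimal offspring number is $1$, so we are in the Schr\"oder case. In this case $W$ has a continuous, strictly positive density $w$ on $(0,\infty)$, and $w(x)$ behaves like $x^{\alpha-1}$ times a periodic factor as $x\to0$, with $q^{j}=\lambda^{-\alpha}$ in the $j$-ancestor case. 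The plan has three steps, carried out in this order.

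\textbf{Step 1: uniform local limit theorem.} We prove that
\[
\sup_{k}\Bigl|\lambda^{n}\PP(D^*_n=k)-w(k/\lambda^{n})\Bigr|\to0\qquad (n\to\infty).
\]
The route is Fourier analysis of $f_n(e^{it})$, where $f(s)=qs+ps^2$. First, $|f(e^{it})|<1$ for $t\notin2\pi\Z$, which holds because the offspring support $\{1,2\}$ is aperiodic. Second, $\EE e^{i\theta W}$ decays like $|\theta|^{-\alpha'}$; this follows from iterating $\phi(\lambda\theta)=\phi(\theta)\bigl(q+p\phi(\theta)\bigr)$. Together these give uniform control of $\int|f_n(e^{it})|\,dt$ away from the region $|t|\lesssim\lambda^{-n}$. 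This is the standard local limit theorem for supercritical Galton-Watson processes in the Schr\"oder case. We also need a crude uniform upper bound
\[
\PP(D^*_n=k)\le C\lambda^{-n}\min\bigl(1,(k/\lambda^n)^{\alpha-1}\bigr)\,e^{-c\,k/\lambda^n}.
\]
The small-$k$ side comes from $\PP(D^*_n=k)\le Cq^{jn}\cdot\mathrm{poly}$ type estimates, i.e.\ the Schr\"oder asymptotics; the large-$k$ side is a Chernoff bound using exponential moments of $W_n$.

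\textbf{Step 2: insert into the mixture.} We write $k=\lambda^{m+\theta}$ with $m=\lfloor\log_\lambda k\rfloor$ and $\theta\in[0,1)$, and set $n=m-\ell$. Using $a=\lambda^\rho$,
\[
p_k=\frac{2p}{a}\sum_{\ell}a^{-(m-\ell)}\lambda^{-(m-\ell)}\Bigl[w\bigl(\lambda^{\ell+\theta}\bigr)+o(1)\Bigr]
=k^{-1-\rho}\,\frac{2p}{a}\sum_{\ell\in\Z}\lambda^{(1+\rho)(\ell+\theta)}\,w\bigl(\lambda^{\ell+\theta}\bigr)+o\bigl(k^{-1-\rho}\bigr).
\]
This defines
\[
\Psi_j(\theta)=\frac{2p}{a}\sum_{\ell\in\Z}\lambda^{(1+\rho)(\ell+\theta)}\,w\bigl(\lambda^{\ell+\theta}\bigr).
\]
The sum over $\ell\to+\infty$ (small $n$) converges because $w$ decays exponentially. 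The sum over $\ell\to-\infty$ (large $n$, $x=\lambda^{\ell+\theta}\to0$) converges because $x^{1+\rho}w(x)\lesssim x^{\rho+\alpha}\to0$.

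Uniformity is handled by splitting the $n$-range. For $|\ell|\le L$ we use the local limit theorem. The tails are controlled by the crude bound of Step 1, with dominated convergence making the tails uniformly small in $k$ as $L\to\infty$. The terms with $n$ small (bounded $n$, $k$ huge) are negligible because $D^*_n\le j2^n$; more generally the exponential bound makes $\sum_{n\le m-L}$ negligible.

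\textbf{Step 3: properties of $\Psi_j$.} Continuity, $1$-periodicity and strict positivity follow from the series: $w$ is continuous and positive, and the series converges locally uniformly. The Fourier coefficients come out as a Mellin transform,
\[
\widehat{\Psi_j}(m)=\frac{2p}{a\log\lambda}\,\Mel[w]\Bigl(2+\rho+\frac{2\pi i m}{\log\lambda}\Bigr)=\frac{2p}{a\log\lambda}\,\EE\bigl[W^{\,1+\rho+2\pi i m/\log\lambda}\bigr],
\]
matching \eqref{eq:fourierintro}.

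\textbf{Main obstacle.} The hard part is Step 1, namely the uniform local limit theorem together with the uniform small-$x$ bound. The $\ell\to-\infty$ end sums contributions from huge $n$ at $k$ tiny relative to $\lambda^n$, where $\PP(D^*_n=k)$ is governed by Schr\"oder behaviour rather than by $w$. I would first try to bound $\PP(D^*_n=k)\le C k^{\alpha-1}\lambda^{-n\alpha}$ directly, by conditioning on the first generation at which the population exceeds $1$ and using $q^{n}$ decay. The needed summability is $\sum_n a^{-n}\lambda^{-n\alpha}k^{\alpha-1}$ over $n\ge\log_\lambda k$, and this is fine since $a>1$.
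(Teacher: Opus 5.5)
Your architecture is the paper's: the mixture \eqref{eq:mix}, a uniform local limit theorem, periodization of $x^{1+\rho}w_j(x)$, and a split of the $n$-sum into a window $\abs{n-N_k}\le L$ plus two tails. But you have put the difficulty on the wrong side.

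For $n>N_k+L$ (your $\ell\to-\infty$, $k\ll\lambda^n$) no Schr\"oder asymptotics are needed. The envelope $\lambda^n\PP(D^*_n=k)\le C$, already contained in your crude bound, together with $\norm{w_j}_\infty<\infty$, bounds that range by $C\sum_{n>N_k+L}\lambda^{-(1+\rho)n}\le C\lambda^{-(1+\rho)L}k^{-1-\rho}$. The geometric weight $a^{-n}$ does all the work. Likewise, the $\ell\to-\infty$ end of your series for $\Psi_j$ converges from boundedness of $w_j$ alone. So you can drop the bound $\PP(D^*_n=k)\le Ck^{\alpha-1}\lambda^{-n\alpha}$. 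This is a Fleischmann--Wachtel-type lower-deviation estimate, and your splitting-time sketch leaves it unproved: after the first split, the same small-$k$ problem recurs for a process with more ancestors.

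The real gap is on the other side, $n<N_k-L$ ($k\gg\lambda^n$). There you need a \emph{local} bound $\PP(D^*_n=k)\le C\lambda^{-n}e^{-ck/\lambda^n}$. A Chernoff bound only gives $\PP(D^*_n\ge k)\le Ce^{-ck/\lambda^n}$, without the factor $\lambda^{-n}$. Writing $n=N_k-r$, that alone bounds the left range only by $Ck^{-\rho}\sum_{r>L}a^{r}e^{-c\lambda^{r}}$, which is not $o(k^{-1-\rho})$. The qualitative LLT error $\varepsilon_n$ cannot fill the intermediate range, because it has no rate and gets multiplied by the growing factors $\lambda^{(1+\rho)r}$.

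The paper does two things here:
\begin{itemize}
\item It first proves the envelope $\sup_l\PP(D_n=l)\le C\lambda^{-n}$ from the product formula for $\abs{f^{(n)}(e^{it})}$.
\item It then feeds envelope and exponential tail into the first-generation recursion $u_n(l)=q\,u_{n-1}(l)+p\,(u_{n-1}*u_{n-1})(l)$, in which one summand of the convolution must be $\ge l/2$.
\end{itemize}
This local bound also gives, through the LLT, the exponential decay of $w_j$ that you use at the $\ell\to+\infty$ end. Citing the classical Dubuc--Seneta LLT is acceptable. Your sketch of it, however, does not work as stated: decay of $\widehat w$ plus aperiodicity gives no uniform-in-$n$ domination. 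The paper gets that domination from the same envelope, in the form $\abs{\psi_n(\theta)}\le C_1(1+\abs\theta)^{-\kappa}$ with $\kappa>1$.

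Finally, your Fourier coefficient is off by one. With $x=\lambda^{v}$ one has $dv=dx/(x\log\lambda)$, so
\[
\widehat\Psi_j(m)=\frac{2p}{a\log\lambda}\int_0^\infty w_j(x)\,x^{\rho+i\chi m}\,dx=\frac{2p}{a\log\lambda}\,\Mel_j(\rho+1+i\chi m)=\frac{2p}{a\log\lambda}\,\EE\bigl[V_j^{\,\rho+i\chi m}\bigr],
\]
not $\frac{2p}{a\log\lambda}\EE\bigl[W^{1+\rho+i\chi m}\bigr]$. As written, your formula does not match \eqref{eq:fourierintro}.
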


The Fourier coefficients announced in \Cref{thm:sharpmain} are explicit. Write $\chi:=2\pi/\log\lambda$
and let $\Mel_j(s):=\EE[V_j^{\,s-1}]$ be the Mellin transform \cite{FGD95} of the density of $V_j:=W_1+\cdots+W_j$,
the sum of $j$ independent copies of the martingale limit $W=\lim_n\lambda^{-n}D_n$ of the
single-ancestor branching process (\Cref{sec:rep}). Then the continuous, period-$1$ function
$\Psi_j$ has Fourier coefficients
\begin{equation}\label{eq:fourierintro}
  \widehat\Psi_j(m):=\int_0^1\Psi_j(u)e^{2\pi imu}\,du=\frac{2p}{a\log\lambda}\,\Mel_j\bigl(\rho+1+i\chi m\bigr),
  \qquad m\in\Z,
\end{equation}
so in particular the mean value $\widehat\Psi_j(0)=\frac{2p}{a\log\lambda}\,\EE[V_j^{\,\rho}]$ is
strictly positive. The function $\Psi_j$ itself and \eqref{eq:fourierintro} are constructed in
\Cref{sec:sharp} (\Cref{prop:phi,prop:fourier}).

\begin{corollary}[Power-law order]\label{cor:theta}
For every integer $j\ge1$, $p_k=\Theta\bigl(k^{-1-\rho}\bigr)$. 
In particular, the limiting degree
distributions of the homogeneous evolving network of \cite{FLH23} ($j=2$) and of the treelike
fast-growth model of \cite{FH17} ($j=1$) satisfy $p_k=\Theta(k^{-1-\rho})$. The case $j=2$ proves
the power-law conjecture of \cite{FLH23}, and the exponent does not depend on $j$.
\end{corollary}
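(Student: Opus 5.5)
The plan is to derive the corollary directly from \Cref{thm:sharpmain}; no further analysis of \eqref{eq:mix} is needed. By \Cref{thm:sharpmain}, $p_k=k^{-1-\rho}\Psi_j(\log_\lambda k)+r_k$ with $r_k=o(k^{-1-\rho})$, where $\Psi_j$ is continuous, strictly positive and $1$-periodic.

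\textbf{Step 1: bound $\Psi_j$.} A continuous $1$-periodic function attains its extrema on the compact interval $[0,1]$. Set $m_j:=\min_{[0,1]}\Psi_j$ and $M_j:=\max_{[0,1]}\Psi_j$. Strict positivity gives $0<m_j\le M_j<\infty$. Periodicity then gives $m_j\le\Psi_j(u)\le M_j$ for every real $u$, and in particular at $u=\log_\lambda k$.

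\textbf{Step 2: absorb the error term.} Since $r_k=o(k^{-1-\rho})$, there is $k_0$ such that $|r_k|\le\tfrac12 m_j k^{-1-\rho}$ for all $k\ge k_0$. For these $k$,
\[
\tfrac12 m_j\,k^{-1-\rho}\le p_k\le \bigl(M_j+\tfrac12 m_j\bigr)k^{-1-\rho}.
\]
This is exactly $p_k=\Theta(k^{-1-\rho})$ in the sense fixed in \Cref{sec:results}, with $c=\tfrac12 m_j$ and $C=M_j+\tfrac12 m_j$. Both constants depend only on $p$ and $j$.

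\textbf{Step 3: specialise to the network models.} By \Cref{thm:rep}, the limiting degree pgfs of the two models are the unique pgf solutions of \eqref{eq:main}, with $j=2$ for \cite{FLH23} and $j=1$ for \cite{FH17}. So the bound of Step 2 applies to both degree distributions, which proves the conjecture of \cite{FLH23}. The exponent $1+\rho$ from \eqref{eq:rhodef} involves only $p$, so it is the same for every $j$; only the constants $c,C$ (through $\Psi_j$) depend on $j$.

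All the substance is in \Cref{thm:sharpmain}, in particular the strict positivity of $\Psi_j$. The only place needing care here is the choice of $k_0$ in Step 2, which must make the error term small relative to the \emph{minimum} $m_j$ rather than to $\Psi_j$ pointwise. Positivity of $m_j$ is what makes this choice possible.
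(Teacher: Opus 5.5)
Your proposal is correct and is exactly the paper's argument. You bound the continuous, strictly positive, $1$-periodic $\Psi_j$ between its positive minimum and finite maximum, then absorb the $o(k^{-1-\rho})$ error from \Cref{thm:sharpmain} for large $k$, with your choice $|r_k|\le\tfrac12 m_j k^{-1-\rho}$ simply making explicit the constants the paper leaves implicit.
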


\begin{proof}
$\Psi_j$ is continuous, positive and $1$-periodic, thus
$0<\min\Psi_j\le\Psi_j\le\max\Psi_j<\infty$. Then, \eqref{eq:sharpmain} gives
$c\,k^{-1-\rho}\le p_k\le C\,k^{-1-\rho}$ for all large $k$.
\end{proof}

Writing $\omega(k)=\Psi_j(\log_\lambda k)$, \eqref{eq:sharpmain} is exactly the multiplicatively
periodic refinement $p_k\sim\omega(k)\,k^{-1-\rho}$ conjectured by \cite{FH17}. Except on a discrete
set of $p$, we prove below that $\Psi_j$ is non-constant and hence that the oscillation is
\emph{genuine}.

\begin{corollary}[Genuine oscillation]\label{cor:osc}
For $j\in\{1,2\}$ there is a discrete set $\mathcal Z_1\subset(0,1)$ such that $\Psi_j$ is
non-constant for every $p\in(0,1)\setminus\mathcal Z_1$. Moreover, for every $j\ge1$, $\Psi_j$
is non-constant for all $p$ in a neighbourhood of $1$. In either case $p_k$ is then not asymptotic to
any constant multiple of $k^{-1-\rho}$.
\end{corollary}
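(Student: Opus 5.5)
The statement to prove is \Cref{cor:osc}. By \eqref{eq:fourierintro}, $\Psi_j$ is non-constant iff $\Mel_j(\rho+1+i\chi m)\ne0$ for some $m\ne0$. It therefore suffices to exhibit one nonzero frequency, and I will use $m=\pm1$. The final claim then follows directly from \Cref{thm:sharpmain}. Suppose $p_k\sim c\,k^{-1-\rho}$. Then $\Psi_j(\log_\lambda k)\to c$. Since $\log_\lambda k \bmod 1$ is dense in $[0,1]$ (consecutive increments tend to $0$) and $\Psi_j$ is continuous, this forces $\Psi_j\equiv c$, a contradiction.

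\emph{Analyticity in $p$.} Set
\[
F_j(p):=\EE[V_j^{\rho(p)+i\chi(p)}].
\]
I would first show that $F_j$ extends to a real-analytic function of $p\in(0,1)$. The tool is the Poincaré/Schröder equation for $W$. Its Laplace transform $\phi(t)=\EE e^{-tW}$ satisfies
\[
\phi(\lambda t)=f(\phi(t)),\qquad f(s)=qs+ps^2.
\]
Since $W$ has a density that is exponentially small near $0$ (offspring $\ge1$, Böttcher/Harris case) and exponential moments, the expectations $\EE[W^{s}]$ are entire in $s$. They are also analytic in $p$, because $\phi$ is obtained as a locally uniform limit of iterates $f^{\circ n}(e^{-t\lambda^{-n}})$ that are analytic in $p$. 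Hence $p\mapsto F_j(p)$ is real-analytic on $(0,1)$. Its zero set is therefore discrete, unless $F_j\equiv0$. So the whole task reduces to showing $F_j\not\equiv0$, and the second assertion reduces to $F_j(p)\ne0$ near $p=1$ (continuity then gives a neighbourhood).

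\emph{Limit $p\to1$.} As $p\to1$, $D_n\to2^n$ deterministically, so $W\to1$ in law. Moreover $V_j\to j$ with all moments converging, by uniform exponential-moment bounds in $p$. Also $\rho\to\log_23$ and $\chi\to2\pi/\log2$, so
\[
F_j(p)\to j^{\log_23+1+2\pi i/\log2}\ne0.
\]
This gives non-constancy of $\Psi_j$ for all $p$ near $1$ and every $j$. In particular $F_j\not\equiv0$, and together with the analyticity step it gives the discrete exceptional set $\mathcal Z_1$. Nothing in this argument is specific to $j\in\{1,2\}$; the restriction matches the statement.

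\emph{Main obstacle.} The delicate step is justifying analyticity in $p$ and uniform moment convergence as $p\to1$. The trouble is that at $p=1$ the law of $W$ degenerates to a point mass, so the density-based Mellin transform loses meaning there. I would work with $\EE[V_j^{s}]$ directly as a moment, avoiding densities. I would bound negative and positive moments uniformly for $p\in[p_0,1)$ via $W\ge\lambda^{-n}D_n\cdot$(martingale ratio) and the standard estimate $\PP(W\le\varepsilon)\le\varepsilon^{\alpha}$ with $\alpha$ uniform. For analyticity I would complexify $p$ in a small neighbourhood and show the iterates remain uniformly bounded there. If this is too technical, a fallback is to take $\mathcal Z_1$ as the zero set of a continuous function that is nonvanishing near $1$. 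That would only give a relatively closed set rather than a discrete one, so the analytic route is preferred.
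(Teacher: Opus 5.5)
Your outline matches the paper's. You use the Fourier criterion (\Cref{cor:const}), the limit $\Mel_j(s_1)=\EE[V_j^{\rho+i\chi}]\to j^{\log_2 3+2\pi i/\log 2}\neq0$ as $p\uparrow1$ (\Cref{prop:near1}), real-analyticity in $p$ to make the zero set discrete (\Cref{prop:analytic,cor:genericp}), and density of $\{\log_\lambda k\}$ for the last sentence. The first, second and fourth steps are fine apart from slips:
\begin{itemize}
\item The limit has no extra $+1$ in the exponent.
\item ``$D_n\to2^n$ for each fixed $n$'' does not by itself give $W\to1$. Use $\operatorname{Var}W=q/\lambda\to0$ instead; a uniform second-moment bound then gives the uniform integrability you need.
\item Since $p_1=q>0$, this is the Schr\"oder case, not B\"ottcher. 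So $\PP(W\le\varepsilon)$ decays only like $\varepsilon^{\log(1/q)/\log\lambda}$, and $\EE[W^s]$ is not entire. This is harmless, because only $\operatorname{Re}s=\rho\in(1,2)$ enters and no negative moments are needed.
\item For a single $\mathcal Z_1$, take the union of the two zero sets (or note $F_2=3F_1$ by \Cref{prop:reduce}).
\end{itemize}

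The genuine gap is the analyticity step, and it carries the entire discrete-set claim. Analyticity of $\varphi_p(t)=\EE_pe^{-tW}$ in $p$ for each fixed $t$ says nothing about a fractional moment until two things are done. First, the moment must be written as an integral over all $t>0$, for example $\EE V_j^{z}=\Gamma(-z)^{-1}\int_0^\infty(\varphi(t)^j-1+jt)\,t^{-z-1}\,dt$ for $1<\operatorname{Re}z<2$. Second, the complexified integrand must be dominated uniformly on a complex neighbourhood of $p_0$.

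Complexifying $p$ while keeping $t$ real, as you propose, fails. The continuation is $1-Y_p(t)$, where $Y_p$ is the Poincar\'e function of $T_p(u)=\lambda u-pu^2$. One has $Y_p(t)=T_p^{N}(Y_p(t\lambda^{-N}))$ with $t\lambda^{-N}$ small but of argument $-N\arg\lambda$.
\begin{itemize}
\item If $\operatorname{Im}p\neq0$, then along real $t\to\infty$ this argument comes within $\abs{\arg\lambda}$ of $\pi$ infinitely often.
\item At those $t$ the seed $Y_p(t\lambda^{-N})$ lies near the negative real axis, which $T_p$ iterates to $\infty$ (stably for $p$ near $p_0$).
\item Hence $\abs{\varphi_p(t)}$ is unbounded on $(0,\infty)$, and the Frullani integral diverges for every non-real $p$ near $p_0$.
\end{itemize}
So the iterates at real $t$ cannot be bounded uniformly on any complex neighbourhood.

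The missing idea is a contour that moves with $p$: put $t=\lambda(p)^v$ with $v\in\R$.
\begin{itemize}
\item At $z=z_1$ one has $t^{-z-1}dt=\log\lambda\,a^{-v}e^{-2\pi iv}dv$, which is holomorphic in $p$.
\item For $v=u+n$, $Y_p(\lambda^{u+n})=T_p^n(Y_p(\lambda^u))$ are forward iterates of a compact curve near $(0,1)$. For $p$ near $p_0$ they converge uniformly to the attracting fixed point $1$ (multiplier $q$), so the integrand's bracket is $O(\abs\lambda^{v})$ as $v\to+\infty$.
\item As $v\to-\infty$, the power series of $Y_p$ at $0$ makes the bracket $O(\abs\lambda^{2v})$.
\item The weight $\abs a^{-v}$ then gives locally uniform integrability, because $\abs\lambda<\abs a<\abs\lambda^2$.
\end{itemize}
This is exactly the mechanism of \Cref{prop:analytic}, which the paper runs for $\EE[W^{z_m}]$ through the kernel of \Cref{prop:kernel}. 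If you do it this way with $\varphi_p^{\,j}$ directly, your route would bypass \Cref{prop:reduce} and give a discrete exceptional set for every $j\ge1$, not only $j\in\{1,2\}$.
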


\Cref{cor:osc} at $j=1$ establishes the conjecture of \cite{FH17} in its precise (non-constant)
form, and at $j=2$ the oscillation seen numerically in \cite{FLH23}.

\paragraph{Strategy of proof.} By \eqref{eq:mix} the tail of $p_k$ is governed by the local
probabilities $\PP(D^*_n=k)$, weighted geometrically, and the exponent $\rho=\log a/\log\lambda$
balances the two competing rates: a generation with $D^*_n\approx k$ occurs near
$n\approx\log_\lambda k$, where the geometric weight $a^{-n}\approx k^{-\rho}$ meets the local
probability $\PP(D^*_n=k)\approx\lambda^{-n}\approx k^{-1}$, producing the order $k^{-1-\rho}$;
equivalently, in the singularity analysis of Flajolet and Odlyzko \cite{FO} the substitution
$x\mapsto x-px+px^{2}$ multiplies $1-x$ by $\lambda$ to first order while \eqref{eq:main} carries
the factor $a$, so a singular part $(1-x)^{\rho}$ is reproduced exactly when $a=\lambda^{\rho}$. To
upgrade this heuristic to the sharp asymptotic \eqref{eq:sharpmain} we analyse the local
probabilities directly (\Cref{sec:local,sec:ground}): a global envelope, uniform exponential
moments, a right-tail local bound, and a uniform local limit theorem obtained from the
$L^{1}$-decay of the characteristic function, the only external input beyond standard
branching-process and Fourier theory being the classical strict positivity of the martingale-limit density. Periodizing the resulting density profile against the
geometric weights yields $\Psi_j$ and its Fourier coefficients (\Cref{sec:sharp}); the period-$1$
structure in $\log_\lambda k$ is the shadow of the discrete scaling $\lambda^{n}$
(\Cref{rem:koenigs}).

\paragraph{Correspondence with the notation of \cite{FH17,FLH23}.} We follow Feng and coauthors as
closely as the enlarged setting permits, and, where the two papers differ, we adopt the later one
\cite{FLH23}. The offspring generating function $f(x)=px^{2}+qx$ and its $n$-th iterate $f^{(n)}$
are as in \cite[eqs.~(22),(23)]{FLH23}; the degree-process generating function $f_n=[f^{(n)}]^{j}$
is their $f_m$ (there $j=2$). Our $j$-ancestor degree process $D^*_n$ (\Cref{sec:rep}) is their
$D^*_m$, with initial degree $j$ in place of $2$ (for $j=1$ it is the process $D_o$ of \cite{FH17}),
and $D_n$ is its single-ancestor building block. The geometric mixing time $Y$ is that of
\cite{FLH23} (denoted $Z$ in \cite{FH17}), and $X\eqd D^*_Y$. The tail exponent $1+\rho$ is
denoted $\gamma$ in \cite{FLH23} (and $\tau$ in \cite{FH17}); we keep $1+\rho$ throughout, reserving
the symbol $\gamma$ for the unrelated phase-decay rate $q+p\delta_0$ of \Cref{lem:envelope}. We
write the generation index
as $n$ (the age $m$ of \cite{FLH23}). The martingale limit
$W=\lim_n\lambda^{-n}D_n$ coincides in law with the limit $M$ of the uniform model of \cite{FH17}
($\EE W=1$, $\operatorname{Var}W=q/\lambda$); it is \emph{not} the edge-count martingale
$Z=\lim_n E_n/(1+2p)^n$ of \cite{FLH23}, which is a different normalization.

\section{The branching-process representation}
\label{sec:rep}

The results of \Cref{sec:results} rest on the geometric-mixture representation \eqref{eq:mix} of
the solution $g$, taken there as a starting point. This section makes that representation precise
and proves it. We introduce the Galton-Watson process underlying the mixture, establish the
representation together with the uniqueness of the pgf solution and the mean $\EE X=2j$
(\Cref{thm:rep}), and record three structural facts about the process: contiguous support, span
$1$, and finite exponential moments. The local analysis of the following sections depends on these
facts.

Let $\xi=1+\Ber(p)$ be the offspring random variable, taking the value $1$ with probability
$q=1-p$ and the value $2$ with probability $p$, with generating function
\begin{equation}\label{eq:offspring}
  f(x)=\EE x^{\xi}=px^{2}+qx=x(q+px),\qquad f'(1)=q+2p=1+p=\lambda .
\end{equation}
Note that $f$ coincides with the composition appearing in \eqref{eq:main}:
\begin{equation}\label{eq:Ff}
  f(x)=px^{2}+qx=x-px+px^{2}.
\end{equation}
Let $(D_n)_{n\ge0}$ be the Galton-Watson process with offspring law $\xi$ started from a single
ancestor, $D_0=1$, and let $(D^*_n)_{n\ge0}$ be the same process started from $j$ ancestors,
realized as
\begin{equation}\label{eq:Ztwo}
  D^*_n=D_n^{(1)}+\cdots+D_n^{(j)},\qquad D^*_0=j,
\end{equation}
where $D^{(1)},\dots,D^{(j)}$ are independent copies of $D$.

\emph{Network interpretation.} In the models of \cite{FH17,FLH23} the degree of a fixed vertex
evolves by exactly this law: at each step every incident edge independently survives and, with
probability $p$, additionally spawns one new incident edge through the common neighbour recruited
by that edge, so it is replaced by $\xi=1+\Ber(p)$ edges. A vertex present with degree $j$ therefore
has degree process $D^*_n$; a newly recruited vertex has degree $j=2$ in \cite{FLH23} and $j=1$ in
the treelike model of \cite{FH17}, and inspecting its degree at the independent geometric age $Y$ of
a uniformly sampled vertex (\Cref{thm:rep}) gives $X\eqd D^*_Y$. The per-vertex degree rate
$\lambda=1+p$ and the graph-growth rate $a=1+2p$ satisfy $\rho=\log a/\log\lambda$. They differ
because each recruited node adds two edges to the graph but only one to each endpoint.

Writing $f^{(n)}$ for the $n$-fold
iterate of $f$ (with $f^{(0)}(x)=x$), the branching property gives $\EE x^{D_n}=f^{(n)}(x)$ and hence
\begin{equation}\label{eq:Gn}
  f_n(x):=\EE x^{D^*_n}=f^{(n)}(x)^{j},\qquad f_0(x)=x^{j},\qquad
  f_{n+1}(x)=f_n\bigl(f(x)\bigr).
\end{equation}

The next statement gives the geometric-mixture representation of $g$, the uniqueness of the
pgf solution, and the mean. For $j=2$ it is due to Feng, Li and Hu \cite{FLH23}, and for $j=1$ to
Feng and Hu \cite{FH17}; the general $j$ is identical. We record it in the present notation, since
the process $(D^*_n)$ and the mixture
\eqref{eq:mix} are used throughout, and give only a sketch.

\begin{theorem}[Representation, uniqueness, and mean; \cite{FH17,FLH23}]\label{thm:rep}
Let $Y$ be a geometric random variable, independent of $(D^*_n)$, with
\begin{equation}\label{eq:geom}
  \PP(Y=n)=\frac{2p}{a}\,a^{-n},\qquad n=0,1,2,\dots
\end{equation}
Then $g(x)=\EE x^{D^*_Y}=\sum_{n\ge0}\tfrac{2p}{a}a^{-n}f_n(x)$ is a pgf supported on
$\{j,j+1,\dots\}$, and it is the unique pgf solution of \eqref{eq:main}. Consequently the mixture
\eqref{eq:mix} holds, and $\EE X=2j$.
\end{theorem}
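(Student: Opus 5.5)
We verify the equation directly from the definition of $g$, then prove uniqueness by a contraction argument.

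\emph{Step 1 (well-definedness and support).} The weights $\tfrac{2p}{a}a^{-n}$ sum to $\tfrac{2p}{a}\cdot\tfrac{1}{1-1/a}=\tfrac{2p}{a-1}=1$, since $a-1=2p$. Hence $\PP(Y=n)$ is a probability law. Each $f_n$ is a pgf, so the series $g$ converges uniformly on $[0,1]$ and is itself a pgf, namely that of $D^*_Y$. Since $\xi\ge1$, the process is nondecreasing, $D^*_n\ge D^*_0=j$, so $g$ is supported on $\{j,j+1,\dots\}$.

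\emph{Step 2 (the equation).} Using $f_{n+1}=f_n\circ f$ from \eqref{eq:Gn} and \eqref{eq:Ff},
\[
g(f(x))=\sum_{n\ge0}\tfrac{2p}{a}a^{-n}f_{n+1}(x)=a\sum_{m\ge1}\tfrac{2p}{a}a^{-m}f_m(x)=a\,g(x)-2p\,x^{j},
\]
because the $m=0$ term is $\tfrac{2p}{a}x^{j}$. This is exactly \eqref{eq:main}. Comparing coefficients of $x^k$ yields \eqref{eq:mix}.

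\emph{Step 3 (uniqueness).} Let $h$ be another pgf solution and set $d=g-h$. Then $a\,d(x)=d(f(x))$ on $[0,1]$. Put $\|d\|=\sup_{[0,1]}|d|\le 2$. Since $f$ maps $[0,1]$ into itself, $a\|d\|\le\|d\|$, and $a>1$ forces $d\equiv0$. If one prefers to work coefficientwise, compare coefficients of the power series instead. The coefficient of $x^k$ in $d\circ f$ involves only the coefficients $d_i$ with $i\le k$, since $f(x)=x(q+px)$ has no constant term. It equals $q^k d_k+\sum_{i<k}(\cdot)d_i$, so induction on $k$ gives $(a-q^k)d_k=0$, hence $d_k=0$. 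Either route works; the sup-norm route is the cleaner one.

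\emph{Step 4 (mean).} We have $\EE D^*_n=j\lambda^n$. Then
\[
\EE X=\sum_n\tfrac{2p}{a}a^{-n}j\lambda^n=\tfrac{2pj}{a}\cdot\tfrac{1}{1-\lambda/a}=\tfrac{2pj}{a-\lambda}=\tfrac{2pj}{p}=2j.
\]
The series converges because $\lambda<a$, and monotone convergence justifies exchanging sum and expectation. Alternatively, differentiate \eqref{eq:main} at $x=1$ to get $a g'(1)=2pj+\lambda g'(1)$, but this requires knowing a priori that $g'(1)<\infty$, which the series computation supplies.

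\emph{Main obstacle.} There is no serious obstacle. The only delicate point is uniqueness within the class of pgfs, and it follows from the contraction factor $1/a<1$ together with $f([0,1])\subseteq[0,1]$.
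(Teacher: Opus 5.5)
Your proof is correct. Steps 1, 2 and 4 coincide with the paper's sketch: $g$ is a convex combination of pgfs, reindexing via $f_{n+1}=f_n\circ f$ gives $g(f(x))=a\,g(x)-2p\,x^{j}$, and the mean is the geometric series $\sum_n\frac{2p}{a}a^{-n}j\lambda^n=2pj/(a-\lambda)=2j$.

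The one genuine difference is your main uniqueness argument. The paper compares coefficients in $h(f(x))=a\,h(x)-2p\,x^{j}$ and solves the triangular recursion \eqref{eq:recursion}, using that $a-q^{k}>0$. That is exactly your secondary, coefficientwise route. Your primary route instead takes sup norms in $a\,d=d\circ f$ on $[0,1]$, using only $f([0,1])\subseteq[0,1]$ and $a>1$. This proves uniqueness among all bounded functions on $[0,1]$ solving \eqref{eq:main}, without any power-series bookkeeping. Since a pgf is determined by its values on $[0,1]$, uniqueness of the law follows.

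The paper's recursion buys something different. It gives uniqueness among all formal power-series solutions, with no boundedness, sign or convergence assumption. It also serves as an explicit algorithm for computing the $p_k$, and it shows directly that $h_0=\cdots=h_{j-1}=0$ are forced. Neither uniqueness class contains the other, and both easily cover the pgf case the theorem needs.
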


\begin{proof}[Sketch; see \cite{FH17,FLH23} for details]
The weights \eqref{eq:geom} are nonnegative and sum to $1$ (since $a-1=2p$), so
$g=\sum_n\PP(Y=n)f_n$ is a convex combination of pgfs, hence the
generating function of $D^*_Y$; as $D^*_n\ge j$, it is supported on $\{j,j+1,\dots\}$. That $g$ solves
\eqref{eq:main} follows from $f_{n+1}=f_n\circ f$ in \eqref{eq:Gn} and reindexing, which give
$g(f(x))=a\,g(x)-2p\,x^{j}$; by \eqref{eq:Ff} this is \eqref{eq:main}. For uniqueness, any power
series solution $h(x)=\sum_k h_kx^{k}$ satisfies $h(f(x))=a\,h(x)-2p\,x^{j}$. Write
$[x^{k}]F$ for the coefficient of $x^{k}$ in a power series $F$, and note that
$f(x)^{i}=x^{i}(q+px)^{i}$ has lowest-order term $q^{i}x^{i}$. Comparing coefficients of $x^{k}$ gives
\begin{equation}\label{eq:recursion}
  \bigl(a-q^{k}\bigr)h_k=2p\,\mathbf 1_{\{k=j\}}+\sum_{i<k}h_i\,[x^{k}]f(x)^{i};
\end{equation}
since $a-q^{k}=1+2p-(1-p)^{k}>0$, this determines $h_k$ from $h_0,\dots,h_{k-1}$ (the first $j$
coefficients $h_0=\cdots=h_{j-1}=0$ being forced), so the solution is unique. Finally
$\EE D_n=\lambda^{n}$ gives $\EE D^*_n=j\lambda^{n}$, so by \eqref{eq:mix} and $a-\lambda=p$,
\[
  \EE X=\sum_{n\ge0}\frac{2p}{a}a^{-n}\,j\lambda^{n}
  =\frac{2pj}{a}\cdot\frac{1}{1-\lambda/a}
  =\frac{2pj}{a-\lambda}=2j ,
\]
equivalently $g'(1)=2j$.
\end{proof}

From now on we work exclusively with the process $(D^*_n)$ and the mixture \eqref{eq:mix}. We
record the two structural facts used repeatedly: the single-ancestor process $D_n$ is supported
on $\{1,2,\dots,2^{n}\}$ (by induction: given $D_n=y$, $D_{n+1}=y+\Bin(y,p)$ has support
$\{y,\dots,2y\}$, and $\bigcup_{y=1}^{2^{n}}\{y,\dots,2y\}=\{1,\dots,2^{n+1}\}$), so, being the
sum of $j$ independent copies,
\begin{equation}\label{eq:support}
  \supp(D^*_n)=\{j,j+1,\dots,j\,2^{n}\},
\end{equation}
a contiguous set, whence $(D^*_n)$ has span $1$; and the offspring is bounded, so all exponential
moments are finite.

\section{Local estimates for the branching process}
\label{sec:local}

We first bound the single-ancestor local probabilities uniformly, then establish uniform
exponential moments on the scale $\lambda^{n}$, and finally combine the two into a right-tail
local bound for $D^*_n$ that is summable against the mixture weights.

\subsection{A global envelope for the local probabilities}

\begin{lemma}\label{lem:envelope}
There is a constant $C<\infty$, depending only on $p$, such that
\[
  \sup_{l\ge1}\PP(D_n=l)\le C\lambda^{-n}\qquad\text{for all }n\ge0 .
\]
\end{lemma}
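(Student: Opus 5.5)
The plan is Fourier inversion plus a dyadic (in base $\lambda$) decomposition of the frequency range, with the exponential smallness of the extinction-like iterates $f^{(m)}(s)$ for $s<1$ as the key input. Write $\varphi_n(t)=\EE e^{itD_n}=f^{(n)}(e^{it})$. Since $D_n$ is integer-valued, inversion gives
\[
  \sup_{l\ge1}\PP(D_n=l)\le\frac1{2\pi}\int_{-\pi}^{\pi}\abs{\varphi_n(t)}\,dt ,
\]
so it suffices to show $\int_{-\pi}^{\pi}\abs{\varphi_n(t)}\,dt\le C\lambda^{-n}$.

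\textbf{Step 1: splitting at intermediate generations.} By the branching property at generation $n-k$,
\[
  \varphi_n(t)=\EE\bigl[\varphi_k(t)^{D_{n-k}}\bigr],\qquad\text{hence}\qquad \abs{\varphi_n(t)}\le f^{(n-k)}\bigl(\abs{\varphi_k(t)}\bigr).
\]
I partition $(0,\pi]$ into the bands $B_k=[\lambda^{-k-1},\lambda^{-k}]$ for $1\le k\le n$, a top band $B_{\rm top}=[\lambda^{-2},\pi]$, and the core $[0,\lambda^{-n}]$. On the core I use the trivial bound $\abs{\varphi_n}\le1$, which contributes $O(\lambda^{-n})$. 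On $B_{\rm top}$ I apply the same inequality with $k=1$.

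\textbf{Step 2: a uniform gap.} I want an $\eta>0$ with $\abs{\varphi_k(t)}\le1-\eta$ for all $k\ge1$ and all $t\in B_k$, and also for $k=1$ on $B_{\rm top}$. I split by the size of $k$.
\begin{itemize}
\item \emph{Fixed $k$.} Since $\supp D_k=\{1,\dots,2^k\}$ is contiguous with span $1$ for $k\ge1$ (by \eqref{eq:support}), we have $\abs{\varphi_k(t)}<1$ for $0<\abs t\le\pi$. Compactness of each band then gives a gap for each fixed $k$. For $k=1$, $\abs{\varphi_1(t)}=\abs{q+pe^{it}}<1$ on $B_{\rm top}$ directly.
\item \emph{Large $k$.} Substitute $u=\lambda^k t\in[\lambda^{-1},1]$. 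Then $\varphi_k(t)=\EE e^{iu\lambda^{-k}D_k}\to\EE e^{iuW}$ uniformly in $u$ on this compact set, because $\lambda^{-k}D_k\to W$ in $L^2$. The limit $W$ is nondegenerate with $\operatorname{Var}W=q/\lambda>0$ and has a density, so $\abs{\EE e^{iuW}}<1$ for $u\ne0$. Compactness of $[\lambda^{-1},1]$ then gives a gap for all $k\ge k_0$.
\end{itemize}
This step is where I expect the main care to be needed: the limit-to-$W$ argument only covers large $k$ and must be patched with the finitely many small $k$. The endpoint $k=0$ is where $\abs{\varphi_0}\equiv1$, which is why the top band is attached to $k=1$.

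\textbf{Step 3: geometric decay of $f^{(m)}(1-\eta)$.} Let $s_i=f^{(i)}(1-\eta)$. Since $f(x)=x(q+px)$ and $0$ is an attracting fixed point with $f'(0)=q$, we have $s_i\downarrow0$ and
\[
  f^{(m)}(1-\eta)=(1-\eta)\prod_{i<m}\bigl(q+p\,s_i\bigr)\le C\gamma^m
\]
for any fixed $\gamma>q$, with $C$ depending on $\eta$ and $\gamma$. This is the phase-decay rate $\gamma=q+p\delta_0$ mentioned in the notation paragraph. Crucially $\lambda q=(1+p)(1-p)=1-p^2<1$, so I can choose $\gamma\in(q,1/\lambda)$.

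\textbf{Step 4: summation.} Combining Steps 1–3, $\abs{\varphi_n(t)}\le C\gamma^{n-k}$ on $B_k$, and $B_k$ has length at most $\lambda^{-k}$. Therefore
\[
  \int_{\lambda^{-n}\le\abs t\le\pi}\abs{\varphi_n}\le C\sum_{k=1}^{n}\lambda^{-k}\gamma^{n-k}
  =C\lambda^{-n}\sum_{r=0}^{n-1}(\lambda\gamma)^{r}\le C'\lambda^{-n},
\]
where the top band contributes the $k=1$ term. Adding the core contribution $O(\lambda^{-n})$ and using the inversion bound gives the lemma, with $C$ depending only on $p$.
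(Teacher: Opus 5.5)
Your proof is correct, and its architecture is the paper's. Both use Fourier inversion, bands $\lambda^{-k-1}\le\abs t\le\lambda^{-k}$, a uniform gap reached by generation $\approx\log_\lambda(1/\abs t)$, then contraction at a rate $\gamma\in(q,\lambda^{-1})$ toward the attracting fixed point $0$ of $f$, and finally the geometric sum $\sum_k\lambda^{-k}\gamma^{n-k}$. Your domination $\abs{f^{(n)}(e^{it})}\le f^{(n-k)}\bigl(\abs{f^{(k)}(e^{it})}\bigr)$ is the same mechanism as the product formula \eqref{eq:prod} combined with \eqref{eq:fac2} (note $\abs{s_{r+1}}\le f(\abs{s_r})$). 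Your Step~4 bound on $B_k$ is, up to constants, the pointwise envelope \eqref{eq:ptwise} that the paper reuses in \Cref{lem:cf}.

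The genuine difference is how the gap is obtained. The paper proves it quantitatively and without reference to $W$. It uses $\abs{q+ps_r}\le\exp(-pq\Delta_r(t))$ and the dispersion estimate \eqref{eq:disp}, itself derived from $1-\cos x\ge x^2/2-x^4/24$ and the moment bounds $\EE D_r^2\ge\lambda^{2r}$, $\EE D_r^4\le C_4\lambda^{4r}$. You obtain it softly, from four ingredients:
\begin{itemize}
\item compactness of $u=\lambda^k t\in[\lambda^{-1},1]$;
\item uniform convergence of $\varphi_k(\lambda^{-k}u)$ to $\EE e^{iuW}$;
\item the bound $\abs{\EE e^{iuW}}<1$ for $u\ne0$;
\item span $1$ for the finitely many small $k$.
\end{itemize}
This is shorter and more conceptual, but the constants $k_0,\eta$ are not explicit.

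More importantly, as written you import the absolute continuity of $W$. The paper's logic runs the other way: \Cref{lem:cf} derives the continuous density of $W$ from \eqref{eq:ptwise}, which is why a self-contained gap argument is valuable there. You can remove that input cheaply. The fixed-point identity gives $\EE e^{i\lambda uW}=f(\EE e^{iuW})$, and $\abs{f(z)}<1$ for $\abs z\le1$, $z\ne1$. So a unimodular value at some $u_0\ne0$ forces $\EE e^{iu_0\lambda^{-r}W}=1$ for all $r\ge1$. That means $W\in(2\pi\lambda^{r}/\abs{u_0})\Z$ a.s.\ for every $r$, which is impossible since $0<W<\infty$ a.s.
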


\begin{proof}
Since $\PP(D_n=l)=\frac{1}{2\pi}\int_{-\pi}^{\pi}f^{(n)}(e^{it})e^{-i l t}\,dt$, it suffices to prove
\begin{equation}\label{eq:I-goal}
  I_n:=\int_{-\pi}^{\pi}\abs{f^{(n)}(e^{it})}\,dt\ \le\ C\lambda^{-n}.
\end{equation}

\emph{Product form and two factor bounds.} Write $s_r=f^{(r)}(e^{it})$. As $f^{(r+1)}=f\circ f^{(r)}$ and
$f(s)=s(q+ps)$, we have $\abs{f^{(r+1)}}=\abs{f^{(r)}}\,\abs{q+ps_r}$; hence, with $\abs{f^{(0)}}=1$,
\begin{equation}\label{eq:prod}
  \abs{f^{(n)}(e^{it})}=\prod_{r=0}^{n-1}\abs{q+ps_r},
\end{equation}
and $\abs{s_r}=\abs{f^{(r)}(e^{it})}$ is nonincreasing in $r$. From
$1-\abs{q+ps}^2=2pq(1-\operatorname{Re}s)+p^2(1-\abs{s}^2)\ge 2pq(1-\operatorname{Re}s)$ for $\abs{s}\le1$,
and $\sqrt{1-x}\le e^{-x/2}$ (valid as $2pq(1-\operatorname{Re}s)\le 4pq\le1$),
\begin{equation}\label{eq:fac1}
  \abs{q+ps_r}\ \le\ \exp\!\big(-pq\,\Delta_r(t)\big),\qquad
  \Delta_r(t):=1-\operatorname{Re}s_r=\EE\big(1-\cos(tD_r)\big)\ge0 .
\end{equation}
Put $\delta_0:=\dfrac{p}{2(1+p)}$ and $\gamma:=q+p\delta_0$. Then $\gamma<\lambda^{-1}$, since
$\gamma<\tfrac1{1+p}\iff q(1+p)+\tfrac{p^2}2<1\iff 1-\tfrac{p^2}2<1$. Consequently
\begin{equation}\label{eq:fac2}
  \abs{s_r}\le\delta_0\ \Longrightarrow\ \abs{q+ps_r}\le\gamma<\lambda^{-1}.
\end{equation}

\emph{A dispersion estimate.} Set $m(t):=\max\{0,\lceil\log_\lambda(1/\abs{t})\rceil\}$ for
$0<\abs{t}\le\pi$. We claim there is $c_*>0$ (depending only on $p$) with
\begin{equation}\label{eq:disp}
  \Delta_\nu(t)\ \ge\ c_B:=1-e^{-pq c_*}\qquad\text{for all }\nu\ge m(t).
\end{equation}
First, using $1-\cos x\ge\frac{x^2}{2}-\frac{x^4}{24}$, $\EE D_r^2\ge(\EE D_r)^2=\lambda^{2r}$, and
$\EE D_r^4\le C_4\lambda^{4r}$ (conditioning on $D_r$ and expanding $D_{r+1}=\sum_{i\le D_r}\xi_i$ with
bounded $\xi_i$ gives $\EE[D_{r+1}^k\mid D_r]\le\lambda^{k}D_r^{k}+C_k D_r^{k-1}$, so an induction on
$r$ yields $\EE D_r^k\le C_k\lambda^{kr}$ for every fixed $k$), there is $\eta\in(0,1)$ with
\begin{equation}\label{eq:small}
  \Delta_r(t)\ge\tfrac14 t^2\lambda^{2r}\qquad\text{whenever }\abs{t}\lambda^r\le\eta.
\end{equation}
Set $c_*:=\min\{\tfrac14(\eta/\lambda)^2,\,1-\cos\eta,\,1-\cos1\}>0$, so that
$c_B=1-e^{-pqc_*}\le pqc_*\le c_*$. We first show $\sum_{k<\nu}\Delta_k(t)\ge c_*$ for every
$\nu\ge\max\{m(t),1\}$. If $\abs{t}\le\eta$, let $k_1$ be the largest $k$ with
$\abs{t}\lambda^{k}\le\eta$; then $k_1<\log_\lambda(1/\abs{t})\le\nu$ and $\abs{t}\lambda^{k_1}>\eta/\lambda$,
so by \eqref{eq:small}
$\sum_{k<\nu}\Delta_k(t)\ge\sum_{k\le k_1}\tfrac14 t^2\lambda^{2k}\ge\tfrac14 t^2\lambda^{2k_1}\ge\tfrac14(\eta/\lambda)^2\ge c_*$.
If $\abs{t}>\eta$, then, since $\nu\ge1$, $\sum_{k<\nu}\Delta_k(t)\ge \Delta_0(t)=1-\cos t\ge1-\cos\eta\ge c_*$
(as $x\mapsto1-\cos x$ increases on $[0,\pi]$). For such $\nu$, \eqref{eq:prod}--\eqref{eq:fac1} then give
$\abs{s_\nu}=\abs{f^{(\nu)}(e^{it})}\le e^{-pq\sum_{k<\nu}\Delta_k}\le e^{-pqc_*}$, whence
$\Delta_\nu(t)\ge1-\abs{s_\nu}\ge c_B$. The only case with $\nu\ge m(t)$ left uncovered is $\nu=m(t)=0$,
which forces $\abs{t}\ge1$ and gives directly $\Delta_0(t)=1-\cos t\ge1-\cos1\ge c_B$. This proves
\eqref{eq:disp}; in particular $\sum_{r=0}^{R-1}\Delta_r(t)\ge c_B\,(R-m(t))_+$.

\emph{Pointwise decay.} Let $r_0(t)$ be the first $r$ with $\abs{s_r}\le\delta_0$. By
\eqref{eq:fac1} and the last display, $\abs{s_r}\le e^{-pqc_B(r-m(t))_+}$, so $r_0(t)\le m(t)+K_0$
with $K_0:=\lceil\log(1/\delta_0)/(pqc_B)\rceil$. Because $\abs{s_r}$ is nonincreasing,
\eqref{eq:fac2} bounds each factor with $r\ge r_0(t)$ by $\gamma$, while the factors with
$r<r_0(t)$ are $\le1$; hence by \eqref{eq:prod},
\begin{equation}\label{eq:ptwise}
  \abs{f^{(n)}(e^{it})}\ \le\ \gamma^{\,(n-r_0(t))_+}\ \le\ C_0\,\gamma^{\,(n-m(t))_+},\qquad
  C_0:=\gamma^{-K_0}.
\end{equation}

\emph{Integration.} Partition $[-\pi,\pi]$, up to a null set of endpoints, into
$B_0=\{1<\abs{t}\le\pi\}$ and, for $m\ge1$, $B_m=\{\lambda^{-m}<\abs{t}\le\lambda^{-m+1}\}$; then
$\abs{B_m}=O(\lambda^{-m})$ and $m(t)=m$ on the interior of $B_m$. By \eqref{eq:ptwise},
\[
  I_n\ \le\ C\sum_{m\ge0}\lambda^{-m}\,\gamma^{(n-m)_+}
  \ =\ C\Big(\lambda^{-n}\sum_{m=0}^{n}(\lambda\gamma)^{\,n-m}+\sum_{m>n}\lambda^{-m}\Big).
\]
Since $\lambda\gamma=1-\tfrac{p^2}{2}<1$, the first sum is $\le\lambda^{-n}/(1-\lambda\gamma)=O(\lambda^{-n})$
and the second is $O(\lambda^{-n})$. This proves \eqref{eq:I-goal}, and the lemma follows.
\end{proof}

\subsection{Uniform exponential moments}

\begin{lemma}\label{lem:expmom}
There exist $\theta>0$ and $M<\infty$, depending only on $p$, such that
\[
  \sup_{n\ge0}\EE\exp\!\Bigl\{\theta\,\frac{D_n}{\lambda^{n}}\Bigr\}\le M,
  \qquad
  \sup_{n\ge0}\EE\exp\!\Bigl\{\theta\,\frac{D^*_n}{\lambda^{n}}\Bigr\}\le M^{j}.
\]
Consequently there are constants $C,c>0$ with
\[
  \PP(D_n\ge x\lambda^{n})\le C e^{-cx},\qquad
  \PP(D^*_n\ge x\lambda^{n})\le C e^{-cx},\qquad n\ge0,\ x\ge0 .
\]
\end{lemma}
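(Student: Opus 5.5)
We will prove the uniform exponential moment bound by iterating the offspring Laplace transform. The target quantity is $\phi_n(\theta):=\EE\exp\{\theta D_n/\lambda^n\}$. By the branching property, $\EE e^{sD_{n}}=f^{(n)}(e^{s})$. Write $L(s):=\log f(e^{s})=\log\EE e^{s\xi}$. Then for $s\ge0$
\[
  \log\EE e^{sD_{n}}=\log f^{(n)}(e^{s})=h_n(s),\qquad h_{n}(s)=h_{n-1}\bigl(L(s)\bigr),\quad h_0(s)=s,
\]
so $h_n=L^{\circ n}$ and $\log\phi_n(\theta)=L^{\circ n}(\theta\lambda^{-n})$.

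The key step is a scalar estimate on $L$. Since $\xi\in\{1,2\}$ is bounded, $L$ is smooth, convex, $L(0)=0$, $L'(0)=\lambda$, and $L(s)\le\lambda s+Ks^2$ for $0\le s\le 1$ with $K$ depending only on $p$. Put $u_k:=L^{\circ k}(\theta\lambda^{-n})$ for $0\le k\le n$. We claim by induction that $u_k\le 2\theta\lambda^{k-n}$, provided $\theta$ is small.

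To carry out the induction, use the ratio recursion $u_{k+1}\le\lambda u_k(1+(K/\lambda)u_k)$. Telescoping gives
\[
  u_k\le\theta\lambda^{k-n}\prod_{i<k}\bigl(1+(K/\lambda)u_i\bigr)\le\theta\lambda^{k-n}\exp\Bigl(\tfrac{K}{\lambda}\sum_{i<k}u_i\Bigr).
\]
Under the inductive hypothesis, $\sum_{i<k}u_i\le2\theta\sum_{i<k}\lambda^{i-n}\le 2\theta/(\lambda-1)=2\theta/p$. The exponent is therefore at most $2K\theta/(p\lambda)$, which is $\le\log2$ once $\theta$ is small enough. This closes the induction and also keeps every $u_k\le2\theta\le1$, so the quadratic bound on $L$ applies throughout.

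Consequently $\log\phi_n(\theta)=u_n\le2\theta$, and we may take $M=e^{2\theta}$. The bound for $D^*_n$ is immediate from independence of the $j$ copies in \eqref{eq:Ztwo}: $\EE e^{\theta D^*_n/\lambda^n}=\phi_n(\theta)^j\le M^j$. The tail bounds then follow from Markov's inequality, $\PP(D^*_n\ge x\lambda^n)\le M^je^{-\theta x}$ and likewise for $D_n$, giving $c=\theta$ and $C=M^j$.

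The main obstacle is keeping the accumulated error uniform in $n$. This is handled by the geometric summability $\sum_i\lambda^{i-n}\le\lambda/(\lambda-1)$: the errors are concentrated in the last few iterations, where $u_k$ is still $O(\theta)$.
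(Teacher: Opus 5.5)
Your proposal is correct and is essentially the paper's proof. The paper's downward chain $A_n(\theta)\le A_{n-1}(\theta_1)\le\cdots\le A_0(\theta_n)$ comes from conditioning on the last generation and using $\log\varphi(u)\le\lambda u+Bu^2$, which is exactly your recursion $u_{k+1}=L(u_k)$ with $\theta_k\lambda^{k-n}$ in the role of $u_k$. Both arguments close the induction ($\theta_k\le 2\theta$, resp.\ $u_k\le 2\theta\lambda^{k-n}$) by geometric summability of $\lambda^{-r}$ and reach the same $M=e^{2\theta}$; the only cosmetic difference is that you control the accumulated error multiplicatively via $\prod(1+x_i)\le e^{\sum x_i}$, whereas the paper sums additive increments.
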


\begin{proof}
Let $\varphi(u)=\EE e^{u\xi}=qe^{u}+pe^{2u}$. Then $\varphi(0)=1$ and $\varphi'(0)=\lambda$, and
$\log\varphi$ is smooth near $0$, so there are $B,T>0$ with
\begin{equation}\label{eq:logphi}
  \log\varphi(u)\le\lambda u+Bu^{2},\qquad 0\le u\le T .
\end{equation}
Write $A_n(t)=\EE\exp\{tD_n/\lambda^{n}\}$. Conditioning on $D_n$ and using the branching
property, then applying \eqref{eq:logphi} with $u=t/\lambda^{n+1}$ (valid once $t/\lambda^{n+1}\le T$), we obtain
\[
  A_{n+1}(t)=\EE\Bigl[\varphi\bigl(t/\lambda^{n+1}\bigr)^{D_n}\Bigr]
  \le\EE\exp\!\Bigl\{\Bigl(\frac{t}{\lambda^{n}}+\frac{Bt^{2}}{\lambda^{2n+2}}\Bigr)D_n\Bigr\}
  =A_n\!\Bigl(t+\frac{Bt^{2}}{\lambda^{n+2}}\Bigr).
\]
Iterating this downward, $A_n(\theta)\le A_{n-1}(\theta_1)\le\cdots\le A_0(\theta_n)$, where
$\theta_0=\theta$ and $\theta_{i+1}=\theta_i+B\theta_i^{2}\lambda^{-(n-i)-1}$. If all
$\theta_i\le2\theta$, then the total increment is at most
$\sum_{r\ge2}B(2\theta)^{2}\lambda^{-r}=4B\theta^{2}S$ with $S:=\sum_{r\ge2}\lambda^{-r}<\infty$;
hence, provided
\[
  \theta+4B\theta^{2}S\le2\theta\quad\text{and}\quad 2\theta\le T,
  \qquad\text{i.e.}\qquad \theta\le\min\Bigl\{\tfrac{1}{4BS},\tfrac{T}{2}\Bigr\},
\]
a straightforward induction shows $\theta_i\le2\theta$ for all $i$. Fix such a $\theta$. Then
$A_n(\theta)\le A_0(2\theta)=\EE e^{2\theta D_0}=e^{2\theta}=:M$, uniformly in $n$. For $D^*_n$,
independence of the $j$ ancestors gives
$\EE e^{\theta D^*_n/\lambda^{n}}=A_n(\theta)^{j}\le M^{j}$. The tail bounds follow from Markov's
inequality: $\PP(D_n\ge x\lambda^{n})\le M e^{-\theta x}$, and likewise for $D^*_n$ with $M^{j}$;
relabel constants.
\end{proof}

\subsection{A right-tail local bound}

We first bound the single-ancestor local probabilities in the right tail; the bound for $D^*_n$
(for each fixed $j$, uniformly in $n$ and $k$) then follows by a union bound.

\begin{lemma}\label{lem:RTsingle}
There exist $C,c>0$, depending only on $p$, such that
\[
  \PP(D_n=l)\le C\lambda^{-n}\exp\!\Bigl\{-c\,\frac{l}{\lambda^{n}}\Bigr\},\qquad n\ge0,\ l\ge1 .
\]
\end{lemma}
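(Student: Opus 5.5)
We combine the envelope of \Cref{lem:envelope} with the exponential tail of \Cref{lem:expmom} by conditioning on an intermediate generation. Fix $n\ge1$ and $l\ge1$. Let $h:=\lfloor n/2\rfloor$ be a splitting index. The branching property gives $D_n=\sum_{i=1}^{D_h}D^{(i)}_{n-h}$, where the $D^{(i)}$ are i.i.d.\ copies of $D$, independent of $D_h$.

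\emph{Step 1: forcing one large sub-population.} On $\{D_n=l\}$, at least one of the $D_h$ summands satisfies $D^{(i)}_{n-h}\ge l/D_h$. We could choose this $i$ as the first index attaining the maximum. Conditioning on $D_h=y$ and on the identity of that index, the remaining $y-1$ summands are free. The chosen summand must then equal $l$ minus their sum, and its value lies in $[l/y,\infty)$. By \Cref{lem:envelope} this has probability at most $C\lambda^{-(n-h)}$. We have to keep the constraint $\ge l/y$, because otherwise we recover only the envelope.

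\emph{Step 2: a cleaner route.} To get an exponential factor I would rather use a local bound with tilting. Write
\[
\PP(D_n=l)\le e^{-\theta l/\lambda^n}\,\EE\bigl[e^{\theta D_n/\lambda^n};D_n=l\bigr]
\]
and estimate the tilted local probability by Fourier inversion:
\[
\PP(D_n=l)=\frac{e^{-\theta l/\lambda^n}}{2\pi}\int_{-\pi}^{\pi}f^{(n)}\bigl(e^{\theta/\lambda^n+it}\bigr)e^{-ilt}\,dt .
\]
It then suffices to show
\[
\int_{-\pi}^{\pi}\abs{f^{(n)}(e^{u+it})}\,dt\le C\lambda^{-n}\qquad\text{for } u=\theta\lambda^{-n}
\]
with $\theta$ small. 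For this I would rerun the proof of \Cref{lem:envelope} with $z=e^{u+it}$. The product formula becomes $\abs{f^{(n)}(z)}=e^{u}\prod_r\abs{q+ps_r}$ with $s_r=f^{(r)}(z)$. The early factors, those with $r$ up to roughly $m(t)$, satisfy $\abs{s_r}\le f^{(r)}(e^{u})\le$ a uniformly bounded quantity, by \Cref{lem:expmom} since $u\lambda^r\le\theta$. The dispersion then proceeds as before: $\abs{s_r}$ is dominated by $\EE[e^{uD_r}]$ times a cosine deficit.

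\emph{Step 3: the main obstacle.} The difficulty is that with real part $u>0$ the moduli $\abs{s_r}$ may exceed $1$, so monotonicity and \eqref{eq:fac1} fail. I would first handle the regime $r\le n-K$. There $u\lambda^{r}\le\theta\lambda^{-K}$ is tiny, so $s_r$ is a small perturbation of $f^{(r)}(e^{it})$. Concretely,
\[
\abs{s_r-f^{(r)}(e^{it})}\le\EE\bigl[D_r(e^{uD_r}-1)\bigr]\ \text{-type bound}\ \le Cu\lambda^r ,
\]
and this can be absorbed into the decay once $\abs{s_r}\le\delta_0$ is reached, since the contraction $\gamma<\lambda^{-1}$ leaves room. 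The last $K$ generations contribute only a bounded factor $\sup\abs{f^{(K)}}\le C$ on a neighbourhood of the closed disc. Once the tilted integral bound $C\lambda^{-n}$ is established, the lemma follows with $c=\theta$.
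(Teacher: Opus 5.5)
Your Step 2--3 route (exponential tilting plus Fourier inversion) is correct and genuinely different from the paper's. The paper uses no Fourier analysis at this point. It splits on the first generation, $u_n(l)=q\,u_{n-1}(l)+p\,(u_{n-1}*u_{n-1})(l)$ with $u_n(l)=\PP(D_n=l)$. It bounds the convolution by $2\sup_m u_{n-1}(m)\,\PP(D_{n-1}\ge l/2)\le C\lambda^{-(n-1)}e^{-c\,l/\lambda^{n-1}}$, using \Cref{lem:envelope} and \Cref{lem:expmom} as black boxes. It then unrolls the linear inequality. The resulting sum $\lambda^{-n}\sum_{u\ge1}q^{u-1}\lambda^{u}e^{-c\lambda^{u}l/\lambda^{n}}$ is $O(\lambda^{-n}e^{-c\lambda l/\lambda^n})$ because $q\lambda=1-p^{2}<1$.

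This is your Step 1 taken with $h=1$ and iterated. The union bound over the large sub-population then costs only a factor $2$, and the one-child branch is absorbed by the contraction $q<\lambda^{-1}$. With $h=\lfloor n/2\rfloor$, as you set it up, the union bound over the $D_h$ indices costs a factor $D_h\approx\lambda^{h}$ and yields only $O(\lambda^{2h-n})$, so you were right to drop it.

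Your tilting argument instead reruns the proof of \Cref{lem:envelope} on the circle $\abs{z}=e^{\theta\lambda^{-n}}$. It is longer and reopens the characteristic-function analysis. In exchange, it is the standard local large-deviation mechanism and gives a stronger by-product: a decay bound for $f^{(n)}$ slightly off the unit circle.

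One step of your sketch needs correcting. For the early factors $r<r_0(t)$, uniform boundedness of $\abs{s_r}$ is not enough. It only gives $\abs{q+ps_r}\le q+pM$ with $q+pM>1$, and there can be of order $n$ such factors, which destroys the $\lambda^{-n}$. You need the excess over $1$ to be summable in $r$, and your own perturbation estimate supplies this:
$\abs{s_r-f^{(r)}(e^{it})}\le\EE[e^{uD_r}-1]\le u\,\EE[D_re^{uD_r}]\le C\theta\lambda^{r-n}=:\epsilon_r$ for $r\le n$. This follows from Cauchy--Schwarz and \Cref{lem:expmom}, with $\theta$ at most half the constant there.

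Define $r_0(t)$ through the untilted orbit, whose modulus is monotone. Then $\abs{q+ps_r}\le1+p\epsilon_r$ for $r<r_0(t)$, and $\abs{q+ps_r}\le\gamma+p\epsilon_r\le\gamma e^{p\epsilon_r/\gamma}$ for $r\ge r_0(t)$. Since $\sum_{r<n}\epsilon_r\le C\theta/p$, this yields $\abs{f^{(n)}(e^{u+it})}\le C\gamma^{(n-r_0(t))_+}$ directly, so the split at $n-K$ is unnecessary. The integration over $t$ is then exactly that of \Cref{lem:envelope}.
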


\begin{proof}
Write $u_n(l)=\PP(D_n=l)$. Splitting on the first generation ($\xi=1$ with probability $q$,
$\xi=2$ with probability $p$, each child founding an independent copy of $D_{n-1}$) yields
\[
  u_n(l)=q\,u_{n-1}(l)+p\,(u_{n-1}*u_{n-1})(l),\qquad n\ge1 .
\]
In the convolution at least one summand is $\ge l/2$, so the envelope of \Cref{lem:envelope}
and the exponential tail of \Cref{lem:expmom} give
\[
  (u_{n-1}*u_{n-1})(l)\le 2\bigl(\sup_m u_{n-1}(m)\bigr)\PP(D_{n-1}\ge l/2)
  \le C_0\lambda^{-(n-1)}e^{-c_0 l/\lambda^{n-1}} .
\]
Unrolling from $n$ down to $0$, with $u_0(l)=\mathbf 1_{\{l=1\}}$, gives
\[
  u_n(l)\le q^{n}\mathbf 1_{\{l=1\}}
  +C_0 p\sum_{s=0}^{n-1}q^{\,n-1-s}\lambda^{-s}e^{-c_0 l/\lambda^{s}} .
\]
Put $y=l/\lambda^{n}$ and reindex by $u=n-s\ge1$: the sum equals
\[
  \lambda^{-n}\sum_{u=1}^{n}q^{\,u-1}\lambda^{u}e^{-c_0\lambda^{u}y}
  \ \le\ \lambda^{-n}e^{-c_0\lambda y}\sum_{u\ge1}q^{\,u-1}\lambda^{u}
  \ =\ \frac{\lambda}{1-q\lambda}\,\lambda^{-n}e^{-c_0\lambda y},
\]
using $\lambda^{u}\ge\lambda$ for $u\ge1$ and $q\lambda=1-p^{2}<1$. Finally
$q^{n}\mathbf 1_{\{l=1\}}\le\lambda^{-n}\le C\lambda^{-n}e^{-c l/\lambda^{n}}$ at $l=1$ (as
$q\lambda=1-p^{2}<1$ gives $q<\lambda^{-1}$, and $e^{-c/\lambda^{n}}\ge e^{-c}$ since $\lambda^{-n}\le1$).
Both terms are thus $\le C\lambda^{-n}e^{-c l/\lambda^{n}}$ with $c=c_0\lambda$, after relabelling.
\end{proof}

\begin{lemma}\label{lem:RT}
There exist $C,c>0$, depending only on $p$ and $j$, such that
\begin{equation}\label{eq:RT}
  \PP(D^*_n=k)\le C\lambda^{-n}\exp\!\Bigl\{-c\,\frac{k}{\lambda^{n}}\Bigr\},
  \qquad n\ge0,\ k\ge j .
\end{equation}
Consequently, writing $N_k=\lfloor\log_\lambda k\rfloor$ and $m=n-N_k$,
\begin{equation}\label{eq:Dm}
  \PP(D^*_n=k)\le\lambda^{-n}\mathcal D_m,\qquad
  \mathcal D_m=\begin{cases}C\exp\{-c\lambda^{-m}\},& m<0,\\[2pt] C,& m\ge0,\end{cases}
\end{equation}
and $\mathcal D_m$ is weighted-summable:
\begin{equation}\label{eq:Dsum}
  \sum_{m\in\Z}\mathcal D_m\,\lambda^{-(\rho+1)m}<\infty .
\end{equation}
\end{lemma}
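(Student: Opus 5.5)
The plan is to deduce \eqref{eq:RT} from \Cref{lem:RTsingle} by a union bound over the $j$ ancestors. Write $D^*_n=D_n^{(1)}+\cdots+D_n^{(j)}$ as in \eqref{eq:Ztwo}. On the event $\{D^*_n=k\}$ at least one summand satisfies $D_n^{(i)}\ge k/j$; by symmetry it suffices to bound, for $i=1$,
\[
\PP\bigl(D^*_n=k,\ D_n^{(1)}\ge k/j\bigr)=\sum_{l\ge k/j}\PP\bigl(D_n^{(1)}=l\bigr)\,\PP\bigl(D_n^{(2)}+\cdots+D_n^{(j)}=k-l\bigr).
\]
Applying \Cref{lem:RTsingle} to the first factor gives $\PP(D_n^{(1)}=l)\le C\lambda^{-n}e^{-c\,l/\lambda^{n}}\le C\lambda^{-n}e^{-(c/j)k/\lambda^{n}}$ uniformly over $l\ge k/j$. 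The remaining factors then sum to at most $1$. Multiplying by $j$ yields \eqref{eq:RT} with $c$ replaced by $c/j$. For $j=1$ the bound is just \Cref{lem:RTsingle}.

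For \eqref{eq:Dm}, put $N_k=\lfloor\log_\lambda k\rfloor$, so $\lambda^{N_k}\le k<\lambda^{N_k+1}$, and let $m=n-N_k$. Then
\[
k/\lambda^{n}\ge\lambda^{N_k-n}=\lambda^{-m},
\]
so $e^{-ck/\lambda^{n}}\le e^{-c\lambda^{-m}}$ for every $m$. For $m<0$ we keep this factor. For $m\ge0$ we bound the exponential by $1$ and use the constant $C$; equivalently, we can use \Cref{lem:envelope} together with the same union bound. This gives $\mathcal D_m$ as stated.

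For \eqref{eq:Dsum}, split the sum at $m=0$. For $m\ge0$ the terms are $C\lambda^{-(\rho+1)m}$, a convergent geometric series because $\lambda>1$ and $\rho+1>0$. For $m<0$ write $m=-r$ with $r\ge1$; the terms are $C\exp\{-c\lambda^{r}\}\lambda^{(\rho+1)r}$. These are summable because the doubly exponential decay $e^{-c\lambda^r}$ beats the single exponential growth $\lambda^{(\rho+1)r}$: for instance, $\lambda^r\ge1+r\log\lambda$, so the terms are $O(e^{-(c\log\lambda - (\rho+1)\log\lambda) r}\cdot e^{-c\lambda^r/2})$, or more simply one notes that $e^{-c\lambda^{r}}\le C_K\lambda^{-Kr}$ for any $K$.

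The only step needing any care is the union-bound reduction, and specifically checking that the constants depend only on $p$ and $j$. Everything else is bookkeeping of the index shift $m=n-N_k$.
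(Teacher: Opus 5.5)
Your proposal is correct and follows the paper's proof step for step: the union bound over which ancestor carries at least $k/j$, then \Cref{lem:RTsingle} applied uniformly over $l\ge k/j$ with the remaining $(j-1)$-fold convolution summing to at most $1$, then $k/\lambda^{n}\ge\lambda^{-m}$ for \eqref{eq:Dm}, and finally geometric versus doubly exponential decay for \eqref{eq:Dsum}. One small remark is that your intermediate estimate via $\lambda^{r}\ge1+r\log\lambda$ is garbled, since the displayed exponent need not be negative when $c$ is small; your fallback $e^{-c\lambda^{r}}\le C_K\lambda^{-Kr}$ with $K>\rho+1$ settles the summability cleanly.
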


\begin{proof}
Write $u_n(l)=\PP(D_n=l)$; by \eqref{eq:Ztwo}, $\PP(D^*_n=k)$ is the value at $k$ of the $j$-fold
convolution of $u_n$. In every nonzero term $k=l_1+\cdots+l_j$ some summand is $\ge k/j$, so a
union bound over which summand is largest gives
\[
  \PP(D^*_n=k)\le j\sum_{l\ge k/j}u_n(l)\,v_n(k-l),
\]
where $v_n$ is the $(j-1)$-fold convolution of $u_n$ (with $v_n=\delta_0$ if $j=1$). By
\Cref{lem:RTsingle}, $u_n(l)\le C\lambda^{-n}e^{-c(k/j)/\lambda^{n}}$ for $l\ge k/j$, while
$\sum_{l}v_n(k-l)\le\sum_m v_n(m)=1$; hence $\PP(D^*_n=k)\le jC\lambda^{-n}e^{-c(k/j)/\lambda^{n}}$,
which is \eqref{eq:RT} after absorbing $j$ into $C$ and relabelling $c$.

For \eqref{eq:Dm}, note $\lambda^{N_k}\le k<\lambda^{N_k+1}$, so
$k/\lambda^{n}=(k/\lambda^{N_k})\lambda^{-m}\ge\lambda^{-m}$; when $m<0$ this and \eqref{eq:RT}
give $\PP(D^*_n=k)\le C\lambda^{-n}e^{-c\lambda^{-m}}$, while for $m\ge0$ we use \eqref{eq:RT}
with the exponential $\le1$. Finally,
\[
  \sum_{m\ge0}C\lambda^{-(\rho+1)m}<\infty,
  \qquad
  \sum_{m<0}C e^{-c\lambda^{-m}}\lambda^{-(\rho+1)m}
  =C\sum_{\ell\ge1}e^{-c\lambda^{\ell}}\lambda^{(\rho+1)\ell}<\infty,
\]
the second series converging because $e^{-c\lambda^{\ell}}$ decays doubly exponentially in
$\ell$. This proves \eqref{eq:Dsum}.
\end{proof}

\section{Analytic groundwork for the sharp asymptotic}\label{sec:ground}

The proof of \Cref{thm:sharpmain} occupies this section and the next; we keep all notation
of \Cref{sec:rep,sec:local}. In particular $\gamma=q+p\delta_0<\lambda^{-1}$,
$\delta_0=\tfrac{p}{2(1+p)}$, $C_0=\gamma^{-K_0}$ and
$m(t)=\max\{0,\lceil\log_\lambda(1/\abs t)\rceil\}$ are as in \Cref{lem:envelope}, and we set
\begin{equation}\label{eq:kappa}
  \kappa:=-\log_\lambda\gamma>1 .
\end{equation}
Three inputs from the preceding sections are used repeatedly: the pointwise envelope
\eqref{eq:ptwise}, $\abs{f^{(n)}(e^{it})}\le C_0\gamma^{(n-m(t))_+}$; the right-tail local bound
\eqref{eq:RT}; and the classical fact that the martingale limit $W=\lim_n\lambda^{-n}D_n$
($\EE W=1$ by the Kesten-Stigum theorem \cite{KS66}) has a continuous density $w$, strictly positive on $(0,\infty)$ --- the process being
supercritical, non-extinct ($p_0=0$) and of Schr\"oder type ($p_1=q>0$); see Athreya and Ney
\cite{AthreyaNey}, Dubuc and Seneta \cite{DubucSeneta}, and Fleischmann and Wachtel \cite{FW}.
(Continuity is in any case reproved in \Cref{lem:cf}.)

Since $D^*_n=D_n^{(1)}+\cdots+D_n^{(j)}$, we have $D^*_n/\lambda^n\to V_j:=W_1+\cdots+W_j$ almost surely,
with $W_1,\dots,W_j$ independent copies of $W$; write $w_j$ for the density of $V_j$, i.e.\ the
$j$-fold convolution $w^{*j}$. Write $\psi_n(\theta):=f^{(n)}(e^{i\theta/\lambda^n})=\EE e^{i\theta D_n/\lambda^n}$
for $\abs\theta\le\pi\lambda^n$ and $\psi_n(\theta):=0$ otherwise, so that $\psi_n(\theta)^j$
agrees with the characteristic function of $D^*_n/\lambda^n$ on $\abs\theta\le\pi\lambda^n$, the only
range entering the inversion below.

\begin{lemma}[Characteristic functions, densities, and decay]\label{lem:cf}
The characteristic function $\widehat w(\theta):=\EE e^{i\theta W}$ and each $\psi_n$ satisfy
\begin{equation}\label{eq:cfbound}
  \abs{\widehat w(\theta)}\le C_1(1+\abs\theta)^{-\kappa},\qquad
  \abs{\psi_n(\theta)}\le C_1(1+\abs\theta)^{-\kappa}\quad(\text{uniformly in }n),
\end{equation}
with $C_1$ depending only on $p$. Consequently $\widehat{w_j}=\widehat w^{\,j}$ satisfies
$\abs{\widehat{w_j}(\theta)}\le C_1^{j}(1+\abs\theta)^{-j\kappa}$ with $j\kappa>1$, so
$\widehat{w_j}\in L^1(\R)$ and $V_j$ has the bounded continuous density
$w_j(x)=\frac1{2\pi}\int_\R\widehat w(\theta)^{j}e^{-i\theta x}\,d\theta$. Moreover $w_j$ is
supported on $[0,\infty)$, is strictly positive on $(0,\infty)$, has $\norm{w_j}_\infty<\infty$
(so $x^{\rho+1}w_j(x)=O(x^{\rho+1})$ as $x\downarrow0$), and satisfies $w_j(x)\le C_2 e^{-c_2 x}$
for $x\ge1$.
\end{lemma}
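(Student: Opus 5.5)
The key step is the uniform bound on $\psi_n$. Everything else follows by limits and standard Fourier arguments.

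\emph{Uniform decay of $\psi_n$.} For $\abs\theta\le\pi\lambda^n$ put $t=\theta/\lambda^n$, so $\abs t\le\pi$. The pointwise envelope \eqref{eq:ptwise} gives
\[
  \abs{\psi_n(\theta)}=\abs{f^{(n)}(e^{it})}\le C_0\gamma^{(n-m(t))_+}.
\]
Since $m(t)=\max\{0,\lceil\log_\lambda(1/\abs t)\rceil\}\le \log_\lambda(\lambda^n/\abs\theta)+1$ when $\abs\theta\ge1$, we have $n-m(t)\ge\log_\lambda\abs\theta-1$. Hence
\[
  \abs{\psi_n(\theta)}\le C_0\gamma^{-1}\gamma^{\log_\lambda\abs\theta}=C_0\gamma^{-1}\abs\theta^{-\kappa},
\]
using $\gamma=\lambda^{-\kappa}$ from \eqref{eq:kappa}. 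For $\abs\theta\le1$ the trivial bound $\abs{\psi_n}\le1$ suffices. Outside $\abs\theta\le\pi\lambda^n$ we have $\psi_n=0$ by definition. This gives the second bound in \eqref{eq:cfbound} with $C_1=2^\kappa C_0\gamma^{-1}$.

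\emph{Passage to $W$.} $D_n/\lambda^n\to W$ almost surely, so $\psi_n(\theta)\to\widehat w(\theta)$ for each fixed $\theta$, since eventually $\abs\theta\le\pi\lambda^n$. The uniform bound therefore passes to the limit and gives the first bound in \eqref{eq:cfbound}. Then $\abs{\widehat w^{\,j}}\le C_1^j(1+\abs\theta)^{-j\kappa}$ with $j\kappa\ge\kappa>1$, so $\widehat{w_j}\in L^1$. Fourier inversion yields a bounded continuous density $w_j$ with $\norm{w_j}_\infty\le(2\pi)^{-1}\norm{\widehat{w_j}}_1$; this also reproves the existence and continuity of $w$ itself (take $j=1$).

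\emph{Support and positivity.} The support lies in $[0,\infty)$ because $W\ge0$. Strict positivity of $w$ on $(0,\infty)$ is the classical fact quoted above. For $j\ge2$, $w_j=w*w_{j-1}$ is a convolution of continuous nonnegative functions. If $x>0$, choose $y\in(0,x)$; both $w(y)$ and $w_{j-1}(x-y)$ are positive, and by continuity the integrand is positive on a neighbourhood of $y$, so $w_j(x)>0$. Boundedness immediately gives $x^{\rho+1}w_j(x)=O(x^{\rho+1})$ as $x\downarrow0$.

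\emph{Exponential decay.} Lemma~\ref{lem:expmom} and Fatou give $\EE e^{\theta_0 V_j}\le M^j$, where $\theta_0$ is the $\theta$ of that lemma. This yields only a tail bound, so a pointwise bound needs one extra step; this is the main obstacle. My first approach is a local-limit comparison. Write
\[
  \lambda^n\PP(D^*_n=k)=\frac1{2\pi}\int_{\abs\theta\le\pi\lambda^n}\psi_n(\theta)^j e^{-i\theta k/\lambda^n}\,d\theta.
\]
By dominated convergence with the uniform $L^1$ majorant $C_1^j(1+\abs\theta)^{-j\kappa}$, this converges to $w_j(x)$ when $k/\lambda^n\to x$. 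Lemma~\ref{lem:RT} bounds the left side by $Ce^{-ck/\lambda^n}$, so in the limit $w_j(x)\le Ce^{-cx}$ for all $x>0$.

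A fallback avoids the local limit. Use $w_j(x)\le \sup_{\abs{y-x}\le h}w_j(y)$ together with equicontinuity of $w_j$, which holds since $\abs\theta\,\abs{\widehat{w_j}}$ is integrable when $j\kappa>2$; otherwise use a Hölder modulus. Averaging over a window of width $h$ and applying the tail bound $\PP(V_j\ge x-1)\le Ce^{-c(x-1)}$ then gives the pointwise bound.
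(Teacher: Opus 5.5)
Your proof is correct and follows the paper's route. You rescale the envelope \eqref{eq:ptwise} to bound $\psi_n$, pass to $\widehat w$ by bounded convergence, invert, and quote the classical positivity of $w$. For the exponential tail you let $\lambda^n\PP(D^*_n=k_n)\to w_j(x)$ and compare with the right-tail bound \eqref{eq:RT}; your inline dominated-convergence step is exactly the paper's \Cref{lem:llt}.

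There is one small slip. Your inequality $m(t)\le n-\log_\lambda\abs\theta+1$ fails when $\lambda^{n+1}<\abs\theta\le\pi\lambda^{n}$, because there $m(t)=0$. That range must be handled separately, as the paper does, via $\abs{\psi_n(\theta)}\le C_0\gamma^{n}\le C_0\pi^{\kappa}\abs\theta^{-\kappa}$. This only changes the constant, for example to $C_1=(2\pi)^{\kappa}C_0$.
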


\begin{proof}
\emph{Decay of $\psi_n$.} If $1\le\abs\theta\le\lambda^{n}$, put $t=\theta/\lambda^{n}$, so
$\abs t\le1$ and $m(t)=\lceil n-\log_\lambda\abs\theta\rceil\le n-\log_\lambda\abs\theta+1$, whence
$(n-m(t))_+\ge\log_\lambda\abs\theta-1$ and \eqref{eq:ptwise} gives
$\abs{\psi_n(\theta)}\le C_0\gamma^{\log_\lambda\abs\theta-1}=C_0\gamma^{-1}\abs\theta^{-\kappa}$. If
$\lambda^{n}<\abs\theta\le\pi\lambda^{n}$, then $\abs t\in(1,\pi]$, $m(t)=0$, so
$\abs{\psi_n(\theta)}\le C_0\gamma^{n}=C_0\lambda^{-\kappa n}\le C_0\pi^{\kappa}\abs\theta^{-\kappa}$.
For $\abs\theta<1$, $\abs{\psi_n}\le1$; for $\abs\theta>\pi\lambda^{n}$, $\psi_n=0$. This is the
uniform bound on $\psi_n$; since $D_n/\lambda^{n}\to W$ a.s., bounded convergence gives
$\psi_n(\theta)\to\widehat w(\theta)$ pointwise, so \eqref{eq:cfbound} holds for $\widehat w$ too.

\emph{Densities.} As $\kappa>1$, $\widehat w\in L^1$, so Fourier inversion gives $W$ the bounded
continuous density $w$; $w=0$ on $(-\infty,0)$ since $W>0$, and $w>0$ on $(0,\infty)$ as recalled
above. Since $V_j$ has characteristic function $\widehat w^{\,j}$ with $j\kappa>1$, the same
inversion gives the bounded continuous density $w_j=w^{*j}$; being a convolution of nonnegative
functions supported on $[0,\infty)$ and positive on $(0,\infty)$, $w_j$ is supported on
$[0,\infty)$ and positive on $(0,\infty)$. As $w_j$ is bounded,
$x^{\rho+1}w_j(x)\le\norm{w_j}_\infty x^{\rho+1}\to0$ as $x\downarrow0$.

\emph{Exponential tail.} By \Cref{lem:llt} below, for $x>0$ and integers $k_n$ with
$k_n/\lambda^{n}\to x$ one has $w_j(x)=\lim_n\lambda^{n}\PP(D^*_n=k_n)$; \eqref{eq:RT} gives
$\lambda^{n}\PP(D^*_n=k_n)\le Ce^{-c\,k_n/\lambda^{n}}\to Ce^{-cx}$, so $w_j(x)\le C_2e^{-c_2x}$ for
$x\ge1$.
\end{proof}

\begin{lemma}[Uniform local limit theorem]\label{lem:llt}
There are $\varepsilon_n\downarrow0$ (depending only on $p$ and $j$) with
$\sup_{k\ge j}\bigl|\lambda^{n}\PP(D^*_n=k)-w_j(k\lambda^{-n})\bigr|\le\varepsilon_n$.
\end{lemma}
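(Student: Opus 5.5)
We prove the bound by Fourier inversion on the lattice and compare with the continuous inversion formula for $w_j$. Since $D^*_n$ is integer-valued, for every $k$ and $x:=k\lambda^{-n}$ we have, after the substitution $t=\theta/\lambda^n$,
\[
  \lambda^{n}\PP(D^*_n=k)=\frac{1}{2\pi}\int_{-\pi\lambda^{n}}^{\pi\lambda^{n}}\psi_n(\theta)^{j}e^{-i\theta x}\,d\theta
  =\frac1{2\pi}\int_\R\psi_n(\theta)^je^{-i\theta x}\,d\theta ,
\]
using the convention $\psi_n=0$ outside $\abs\theta\le\pi\lambda^n$. By \Cref{lem:cf},
\[
  w_j(x)=\frac1{2\pi}\int_\R\widehat w(\theta)^je^{-i\theta x}\,d\theta .
\]
Subtracting and bounding $\abs{e^{-i\theta x}}\le1$ gives, uniformly in $k$ (indeed in all real $x$),
\[
  \sup_{k}\bigl|\lambda^{n}\PP(D^*_n=k)-w_j(k\lambda^{-n})\bigr|\le\frac1{2\pi}\int_\R\bigl|\psi_n(\theta)^j-\widehat w(\theta)^j\bigr|\,d\theta=:\varepsilon'_n .
\]
It therefore suffices to show $\varepsilon'_n\to0$. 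We then set $\varepsilon_n:=\sup_{m\ge n}\varepsilon'_m$, which is nonincreasing, tends to $0$, and depends only on $p,j$.

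To show $\varepsilon'_n\to0$ we use dominated convergence. First, pointwise convergence: for each fixed $\theta$, eventually $\abs\theta\le\pi\lambda^n$, so $\psi_n(\theta)=\EE e^{i\theta D_n/\lambda^n}$. Since $D_n/\lambda^n\to W$ almost surely, bounded convergence gives $\psi_n(\theta)\to\widehat w(\theta)$, hence $\psi_n(\theta)^j\to\widehat w(\theta)^j$. Second, domination: by \eqref{eq:cfbound},
\[
  \bigl|\psi_n(\theta)^j-\widehat w(\theta)^j\bigr|\le 2C_1^j(1+\abs\theta)^{-j\kappa},
\]
uniformly in $n$. This dominating function is integrable because $j\kappa\ge\kappa>1$ by \eqref{eq:kappa}. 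Dominated convergence then gives $\varepsilon'_n\to0$.

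The key input, and the only real obstacle, is the uniform-in-$n$ polynomial decay of $\psi_n$ with exponent $\kappa>1$. This decay also covers the range $\lambda^n<\abs\theta\le\pi\lambda^n$, where the rescaled lattice characteristic function is not close to $\widehat w$. Without it the tails of the inversion integral could not be controlled uniformly. That decay was already established in \Cref{lem:cf} from the pointwise envelope \eqref{eq:ptwise}. Its proof there used only \eqref{eq:ptwise} and a.s. convergence, not the present lemma (only the exponential-tail statement of \Cref{lem:cf} invokes \Cref{lem:llt}), so there is no circularity.

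The rest is routine, with one caveat. We must ensure the inversion formula $w_j=\frac1{2\pi}\int\widehat w^je^{-i\theta x}\,d\theta$ holds pointwise everywhere, not just almost everywhere. This follows because $\widehat w^j\in L^1$ and $w_j$ is the continuous version, both recorded in \Cref{lem:cf}.
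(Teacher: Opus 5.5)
Your proof is correct and follows the paper's argument essentially verbatim. You use Fourier inversion after the substitution $t=\theta/\lambda^n$, bound the difference independently of $k$ by $\frac1{2\pi}\int|\psi_n^j-\widehat w^{\,j}|\,d\theta$, apply dominated convergence with the uniform decay \eqref{eq:cfbound} and $j\kappa>1$, and then monotonize via $\varepsilon_n:=\sup_{m\ge n}\varepsilon'_m$. The only cosmetic difference is that you absorb the paper's separate $L^1$-tail term $\int_{|\theta|>\pi\lambda^n}|\widehat w|^j\,d\theta$ into a single integral over $\R$ via the convention $\psi_n=0$ there.
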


\begin{proof}
$D^*_n$ has generating function $[f^{(n)}]^{\,j}$; substituting $t=\theta/\lambda^{n}$ and writing
$x:=k\lambda^{-n}$,
\[
\begin{aligned}
  \lambda^{n}\PP(D^*_n=k)
  &=\frac{\lambda^{n}}{2\pi}\int_{-\pi}^{\pi}f^{(n)}(e^{it})^{j}e^{-ikt}\,dt\\
  &=\frac1{2\pi}\int_{-\pi\lambda^{n}}^{\pi\lambda^{n}}\psi_n(\theta)^{j}e^{-ix\theta}\,d\theta,
\end{aligned}
\]
while Fourier inversion gives $w_j(x)=\frac1{2\pi}\int_\R\widehat w(\theta)^{j}e^{-ix\theta}\,d\theta$.
With this $x$,
\[
\begin{aligned}
  \bigl|\lambda^{n}\PP(D^*_n=k)-w_j(x)\bigr|
  &\le\frac1{2\pi}\int_{\abs\theta\le\pi\lambda^{n}}
  \bigl|\psi_n(\theta)^{j}-\widehat w(\theta)^{j}\bigr|\,d\theta\\
  &\quad+\frac1{2\pi}\int_{\abs\theta>\pi\lambda^{n}}\abs{\widehat w(\theta)}^{j}\,d\theta
  =:\varepsilon_n,
\end{aligned}
\]
independent of $k$. The first integrand is dominated by $2C_1^{j}(1+\abs\theta)^{-j\kappa}\in L^1$
(as $j\kappa>1$) and $\to0$ pointwise by \eqref{eq:cfbound}; the second is an $L^1$-tail. Hence
$\varepsilon_n\to0$, and replacing $\varepsilon_n$ by $\sup_{r\ge n}\varepsilon_r$ makes it
nonincreasing.
\end{proof}

\noindent(The exponential-tail step of \Cref{lem:cf} uses only \Cref{lem:llt}, whose proof is
independent of it; there is no circularity.)

\section{The sharp asymptotic}\label{sec:sharp}

With the local analysis of \Cref{sec:local,sec:ground} in hand, we now construct the periodic factor
$\Psi_j$ and prove the sharp asymptotic \eqref{eq:sharpmain} of \Cref{thm:sharpmain}, and then
compute its Fourier coefficients. The mechanism is to periodize the density profile $w_j$ against the
geometric weights of the mixture \eqref{eq:mix}: summed over generations $n$, the local terms
organize, on the scale $\log_\lambda k$, into a single continuous function of period $1$.

Write $\eta(y):=y^{\rho+1}w_j(y)$ ($y>0$); by \Cref{lem:cf}, $\eta$ is continuous with
$\eta(y)=O(y^{\rho+1})$ as $y\downarrow0$ and $\eta(y)\le C y^{\rho+1}e^{-c_2 y}$ for $y\ge1$, so
$\eta$ is bounded and $\int_0^\infty\eta(y)\tfrac{dy}{y}=\int_0^\infty w_j(y)y^{\rho}dy
=\EE[V_j^{\rho}]<\infty$ (as $\rho<2$ and $V_j$ has a finite second moment).

\begin{proposition}[Periodized profile]\label{prop:phi}
The series $\Phi(u):=\sum_{\ell\in\Z}\eta\bigl(\lambda^{\,u-\ell}\bigr)$ converges locally uniformly
on $\R$, and uniformly on $[0,1]$, to a continuous, strictly positive function of period $1$. Moreover, with
$N=\lfloor\log_\lambda x\rfloor$ and $\{\,\cdot\,\}$ denoting the fractional part,
\begin{equation}\label{eq:Hphi}
\begin{aligned}
  x^{\rho+1}\sum_{n\ge0}\lambda^{-(\rho+1)n}w_j(x\lambda^{-n})
  &=\Phi(\log_\lambda x)-\sum_{\ell<-N}\eta(\lambda^{\{\log_\lambda x\}-\ell})\\
  &=\Phi(\log_\lambda x)+o(1)\qquad(x\to\infty),
\end{aligned}
\end{equation}
the error being uniform in $\{\log_\lambda x\}$.
\end{proposition}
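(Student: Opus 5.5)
The plan is to control the series $\sum_\ell \eta(\lambda^{u-\ell})$ by the two-sided bounds on $\eta$ from \Cref{lem:cf}. These bounds are $\eta(y)\le C y^{\rho+1}$ for $0<y\le1$ (boundedness of $w_j$) and $\eta(y)\le C y^{\rho+1}e^{-c_2y}$ for $y\ge1$.

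\textbf{Convergence.} Fix a compact interval $u\in[-R,R]$. For $\ell\ge R$ we have $\lambda^{u-\ell}\le1$, so the term is at most $C\lambda^{(\rho+1)(R-\ell)}$, which is geometrically summable. For $\ell\le -R-1$ we have $\lambda^{u-\ell}\ge\lambda^{-R-\ell}\ge1$, and the term is at most $C y^{\rho+1}e^{-c_2y}$ with $y\ge\lambda^{|\ell|-R}$, which is doubly-exponentially summable. The Weierstrass M-test then gives locally uniform convergence. Each term $u\mapsto\eta(\lambda^{u-\ell})$ is continuous because $w_j$ is, so $\Phi$ is continuous.

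\textbf{Periodicity and positivity.} Periodicity is the reindexing $\ell\mapsto\ell+1$. Positivity holds because every term is $\ge0$ and, for instance, the $\ell=0$ term is $\lambda^{u(\rho+1)}w_j(\lambda^u)>0$, using the strict positivity of $w_j$ on $(0,\infty)$. A continuous positive periodic function then has a positive minimum, which will be needed later.

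\textbf{Identity \eqref{eq:Hphi}.} Write $x=\lambda^{N+\phi}$ with $\phi=\{\log_\lambda x\}$. Then
\[
x^{\rho+1}\lambda^{-(\rho+1)n}w_j(x\lambda^{-n})=\eta(x\lambda^{-n})=\eta\bigl(\lambda^{\phi+N-n}\bigr).
\]
Set $\ell=n-N$. As $n$ ranges over $n\ge0$, $\ell$ ranges over $\ell\ge-N$, so the left side equals
\[
\sum_{\ell\ge -N}\eta(\lambda^{\phi-\ell})=\Phi(\phi)-\sum_{\ell<-N}\eta(\lambda^{\phi-\ell}).
\]
This is the first equality, since $\Phi(\log_\lambda x)=\Phi(\phi)$ by periodicity.

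\textbf{The remainder.} For $\ell<-N$, i.e. $\ell\le-N-1$, the argument satisfies $\lambda^{\phi-\ell}\ge\lambda^{N+1}>x\ge1$. The exponential bound therefore applies and gives
\[
\sum_{\ell<-N}\eta(\lambda^{\phi-\ell})\le C\sum_{r\ge N+1}\lambda^{(\rho+1)(r+1)}e^{-c_2\lambda^{r}}.
\]
This is the tail of a convergent series, independent of $\phi$, and tends to $0$ as $N\to\infty$. That gives the $o(1)$ term, uniform in $\{\log_\lambda x\}$.

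The main point requiring care is this last bookkeeping: checking that the discarded indices are exactly those with large argument. This is where the exponential decay of $w_j$ from \Cref{lem:cf}, and hence \Cref{lem:llt} together with \eqref{eq:RT}, enters. Everything else is the M-test.
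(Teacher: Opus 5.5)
Your proof is correct and follows the paper's argument essentially step for step. It uses the Weierstrass M-test with $\eta(y)\le\norm{w_j}_\infty y^{\rho+1}$ for small $y$ and the exponential bound of \Cref{lem:cf} for large $y$, then periodicity by reindexing, positivity from $w_j>0$, the substitution $\ell=n-N$ for the identity, and a doubly-exponentially small tail for the remainder, where your bound $C\sum_{r\ge N+1}\lambda^{(\rho+1)(r+1)}e^{-c_2\lambda^{r}}$ (using $0\le\phi<1$ separately in the polynomial and exponential factors) makes the uniformity in $\{\log_\lambda x\}$ slightly more explicit than the paper does.
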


\begin{proof}
For $\ell\to+\infty$, $\lambda^{u-\ell}\downarrow0$ and $\eta(\lambda^{u-\ell})=O(\lambda^{(\rho+1)(u-\ell)})$
is summable; for $\ell\to-\infty$, $\lambda^{u-\ell}\to\infty$ and
$\eta(\lambda^{u-\ell})\le C\lambda^{(\rho+1)(u-\ell)}e^{-c_2\lambda^{u-\ell}}$ is summable, both
uniformly for $u$ in bounded sets. Hence the series converges locally uniformly; each term is continuous,
so $\Phi$ is continuous, and $\Phi(u+1)=\Phi(u)$ by reindexing. Every term is $>0$ (as $w_j>0$ on
$(0,\infty)$), so $\Phi>0$. Writing $\log_\lambda x=N+u$, $u=\{\log_\lambda x\}$, the left side of
\eqref{eq:Hphi} equals $\sum_{n\ge0}\eta(\lambda^{N+u-n})=\sum_{\ell\ge-N}\eta(\lambda^{u-\ell})$
(with $\ell=n-N$), i.e.\ $\Phi(u)$ minus the tail $\sum_{\ell<-N}\eta(\lambda^{u-\ell})$; that tail is
$\le\sum_{r\ge1}C\lambda^{(\rho+1)(u+N+r)}e^{-c_2\lambda^{u+N+r}}\to0$ as $N\to\infty$, uniformly
in $u$.
\end{proof}

Define
\begin{equation}\label{eq:Psidef}
  \Psi_j(u):=\frac{2p}{a}\,\Phi(u),
\end{equation}
which by \Cref{prop:phi} is continuous, strictly positive and of period $1$. This is the function
of \Cref{thm:sharpmain}, whose sharp asymptotic \eqref{eq:sharpmain} we now prove.

\begin{proof}[Proof of \Cref{thm:sharpmain}]
Let $H(x):=\sum_{n\ge0}\lambda^{-(\rho+1)n}w_j(x\lambda^{-n})$, so \Cref{prop:phi} gives
$H(x)=x^{-\rho-1}\Phi(\log_\lambda x)+o(x^{-\rho-1})$. It remains to prove
$p_k=\tfrac{2p}{a}H(k)+o(k^{-1-\rho})$. By \eqref{eq:mix} and $a=\lambda^{\rho}$,
$p_k-\tfrac{2p}{a}H(k)=\tfrac{2p}{a}R(k)$ with
$R(k)=\sum_{n\ge0}\lambda^{-(\rho+1)n}\bigl[\lambda^{n}\PP(D^*_n=k)-w_j(k\lambda^{-n})\bigr]$. We bound
$\abs{R(k)}\le\sum_{n\ge0}\lambda^{-(\rho+1)n}\bigl|\lambda^{n}\PP(D^*_n=k)-w_j(k\lambda^{-n})\bigr|$; fix
$L\in\N$ and split at $\abs{n-N_k}\le L$ versus $>L$, where $N_k=\lfloor\log_\lambda k\rfloor$.

\emph{Window $\abs{n-N_k}\le L$.} \Cref{lem:llt} bounds each bracket by
$\varepsilon_n\le\varepsilon_{N_k-L}$, so the window contributes at most
\[
  \varepsilon_{N_k-L}\sum_{\abs{n-N_k}\le L}\lambda^{-(\rho+1)n}
  \le C_L\,\varepsilon_{N_k-L}\,k^{-1-\rho}=o(k^{-1-\rho}).
\]

\emph{Left tail $n<N_k-L$.} Put $r=N_k-n>L$, so $k/\lambda^{n}\ge\lambda^{r}$. Both terms are
exponentially small: \eqref{eq:RT} gives $\lambda^{n}\PP(D^*_n=k)\le Ce^{-c\lambda^{r}}$, and
\Cref{lem:cf} gives $w_j(k\lambda^{-n})\le C e^{-c_2\lambda^{r}}$. Hence
\[
\begin{aligned}
  \sum_{n<N_k-L}\lambda^{-(\rho+1)n}\bigl|\lambda^{n}\PP(D^*_n=k)-w_j(k\lambda^{-n})\bigr|
  &\le C k^{-1-\rho}\sum_{r>L}\lambda^{(\rho+1)r}e^{-c'\lambda^{r}}\\
  &=\eta_1(L)\,k^{-1-\rho}
\end{aligned}
\]
with $\eta_1(L)\to0$.

\emph{Right tail $n>N_k+L$.} Here $k/\lambda^{n}<\lambda^{-L}$; both terms are merely bounded
($\lambda^{n}\PP(D^*_n=k)\le C$ by \eqref{eq:RT}, $w_j\le\norm{w_j}_\infty$), so
$\sum_{n>N_k+L}\lambda^{-(\rho+1)n}\le C\lambda^{-(\rho+1)(N_k+L)}\le C\lambda^{-(\rho+1)L}k^{-1-\rho}
=\eta_2(L)k^{-1-\rho}$ with $\eta_2(L)\to0$.

Letting $k\to\infty$ then $L\to\infty$ gives $R(k)=o(k^{-1-\rho})$, so
$p_k=\tfrac{2p}{a}H(k)+o(k^{-1-\rho})=k^{-1-\rho}\tfrac{2p}{a}\Phi(\log_\lambda k)+o(k^{-1-\rho})$.
Continuity, positivity and periodicity of $\Psi_j=\tfrac{2p}{a}\Phi$ are those of $\Phi$
(\Cref{prop:phi}).
\end{proof}

The Fourier coefficients of $\Psi_j$ are computed directly from the periodization, with no contour
shift.

\begin{proposition}[Fourier coefficients]\label{prop:fourier}
Let $\chi:=2\pi/\log\lambda$, $s_m:=\rho+1+i\chi m$, and
$\Mel_j(s):=\int_0^\infty w_j(x)x^{s-1}dx=\EE[V_j^{\,s-1}]$, finite and analytic for
$\operatorname{Re}s>1$. Then the continuous period-$1$ function $\Psi_j$ has Fourier coefficients
\begin{equation}\label{eq:fourier}
  \widehat\Psi_j(m):=\int_0^1\Psi_j(u)e^{2\pi i m u}\,du=\frac{2p}{a\log\lambda}\,\Mel_j(s_m).
\end{equation}
In particular $\widehat\Psi_j(0)=\overline\Psi_j=\frac{2p}{a\log\lambda}\EE[V_j^{\,\rho}]>0$.
\end{proposition}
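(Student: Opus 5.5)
The computation is a standard unfolding of the periodization. By \Cref{prop:phi}, the series $\Phi(u)=\sum_{\ell\in\Z}\eta(\lambda^{u-\ell})$ converges uniformly on $[0,1]$. Its terms are nonnegative, so we may interchange sum and integral:
\[
  \int_0^1\Phi(u)e^{2\pi imu}\,du=\sum_{\ell\in\Z}\int_0^1\eta(\lambda^{u-\ell})e^{2\pi imu}\,du .
\]
Because $e^{2\pi imu}$ has period $1$, we may substitute $v=u-\ell$ in the $\ell$-th term without changing the exponential. This turns the integral over $[0,1]$ into one over $[-\ell,1-\ell]$. These intervals tile $\R$, so the sum collapses to the single integral
\[
  \int_{\R}\eta(\lambda^{v})e^{2\pi imv}\,dv .
\]
Absolute integrability is justified by Tonelli applied to $\abs{\eta}=\eta$. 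The result $\int_\R\eta(\lambda^v)\,dv=\int_0^\infty\eta(y)\,dy/(y\log\lambda)$ is finite by the remark preceding \Cref{prop:phi}.

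Next I change variables $y=\lambda^{v}$, so that $dv=dy/(y\log\lambda)$ and $e^{2\pi imv}=y^{2\pi im/\log\lambda}=y^{i\chi m}$. The integral becomes
\[
  \frac{1}{\log\lambda}\int_0^\infty y^{\rho+1}w_j(y)\,y^{i\chi m}\,\frac{dy}{y}
  =\frac{1}{\log\lambda}\int_0^\infty w_j(y)\,y^{s_m-1}\,dy=\frac{\Mel_j(s_m)}{\log\lambda}.
\]
Multiplying by $2p/a$, as in \eqref{eq:Psidef}, gives \eqref{eq:fourier}.

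Convergence of $\Mel_j(s)$ for $\operatorname{Re}s>1$ follows from \Cref{lem:cf}. Near $0$ the density $w_j$ is bounded and $y^{\operatorname{Re}s-1}$ is integrable there since $\operatorname{Re}s>0$. Near $\infty$ the exponential tail bound $w_j(y)\le C_2e^{-c_2y}$ gives integrability. Analyticity in $s$ then follows by dominated convergence, using locally uniform domination on compact subsets of the half-plane. The point $s_m$ has real part $\rho+1>1$, so it lies in this region.

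For $m=0$ we get $\Mel_j(\rho+1)=\EE[V_j^{\rho}]$. This is strictly positive because $V_j>0$ almost surely, equivalently because $w_j>0$ on $(0,\infty)$.

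No step is a real obstacle. The only point needing care is the interchange of sum and integral, and the positivity of $\eta$ together with the finiteness of its log-Haar integral (noted just before \Cref{prop:phi}) handles it.
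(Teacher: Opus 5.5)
Your proposal is correct and follows the paper's proof essentially step for step. You integrate the periodized series termwise, justified by the finiteness of $\int_\R\eta(\lambda^v)\,dv=\EE[V_j^{\rho}]/\log\lambda$; you unfold via $v=u-\ell$ using the $1$-periodicity of $e^{2\pi imu}$; and you substitute $y=\lambda^{v}$ to obtain $\Mel_j(s_m)$, where your explicit convergence and analyticity check (bounded density near $0$, exponential tail from \Cref{lem:cf}) just spells out what the paper attributes briefly to the moments of $V_j$.
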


\begin{proof}
By \Cref{prop:phi} and $\Psi_j=\tfrac{2p}{a}\Phi$, uniform convergence of
$\sum_\ell\eta(\lambda^{u-\ell})$ permits termwise integration:
$\widehat\Psi_j(m)=\tfrac{2p}{a}\sum_{\ell\in\Z}\int_0^1\eta(\lambda^{u-\ell})e^{2\pi imu}\,du$.
Substituting $v=u-\ell$ (as $u$ runs over $[0,1)$ and $\ell$ over $\Z$, $v$ runs over $\R$) and
using $e^{2\pi im(v+\ell)}=e^{2\pi imv}$,
\[
\begin{aligned}
  \widehat\Psi_j(m)
  &=\frac{2p}{a}\int_\R\eta(\lambda^{v})e^{2\pi imv}\,dv
  =\frac{2p}{a\log\lambda}\int_0^\infty w_j(x)\,x^{\rho+i\chi m}\,dx\\
  &=\frac{2p}{a\log\lambda}\,\Mel_j(s_m),
\end{aligned}
\]
the middle step setting $x=\lambda^{v}$. Analyticity of $\Mel_j$ for $\operatorname{Re}s>1$ is that
of the moments of $V_j$; the interchange is justified by
$\int_\R\abs{\eta(\lambda^v)}\,dv=\tfrac1{\log\lambda}\EE[V_j^{\rho}]<\infty$.
\end{proof}

\begin{remark}[Koenigs/de Bruijn origin of the period]\label{rem:koenigs}
The period-$1$ (in $\log_\lambda k$) structure is the shadow of the discrete scale $\lambda^{n}$, the
discrete-scale-invariance origin of log-periodicity familiar from asymptotic analysis \cite{deBruijn}.
At the repelling fixed point $s=1$ of $f$ (multiplier $\lambda$),
$Q(\theta)=\lim_nf^{(n)}(e^{-\theta\lambda^{-n}})=\EE e^{-\theta W}$ solves $Q(\lambda\theta)=f(Q(\theta))$,
conjugating $f$ to $\theta\mapsto\lambda\theta$; integer iteration samples this on the geometric
lattice $\{\lambda^{n}\}$, whence the $\log_\lambda$-period. Equivalently, in $t=1-x$ the map
$T(t)=1-f(1-t)=\lambda t-pt^2$ fixes $0$ with multiplier $\lambda$; its Koenigs coordinate \cite{KCG90}
$\kappa_\ast$---the normalized solution of $\kappa_\ast(T(t))=\lambda\kappa_\ast(t)$ ($\kappa_\ast(0)=0$,
$\kappa_\ast'(0)=1$), obtained for $0\le t<1$ as $\kappa_\ast(t)=\lim_n\lambda^{n}b^{(n)}(t)$ via the contracting inverse
branch $b(t)=\tfrac{\lambda-\sqrt{\lambda^2-4pt}}{2p}$ of $T$ (the forward limit
$\lambda^{n}(1-f^{(n)}(1-t))$ diverges, since $1-f^{(n)}(1-t)=T^{(n)}(t)\uparrow1$)---formally linearizes
the singularity of $g$ at $x=1$. This suggests a singular part
$\sim C\,\kappa_\ast(1-x)^{\rho}\,\Pi(\log_\lambda\kappa_\ast(1-x))$ that, under the Flajolet-Odlyzko
transfer theorem \cite{FO}, would give \eqref{eq:sharpmain}; we do not carry this out---the required
$\Delta$-domain continuation and transfer hypotheses are not verified here---as \Cref{thm:sharpmain} is
established probabilistically in \Cref{sec:sharp}.
\end{remark}

\section{Non-constancy of \texorpdfstring{$\Psi_j$}{Psi\_j}}\label{sec:const}

With $\Psi_j$ now in hand, it remains to decide when its oscillation is genuine, i.e.\ when $\Psi_j$
is not constant. By the Fourier formula \eqref{eq:fourier} this is exactly a question about the
vanishing of the Mellin values $\Mel_j(s_m)$ away from the mean $m=0$. We first record this
equivalence, then, for the two network models, reduce it to a single-particle non-tiling problem for
an explicit positive kernel, and settle it for $p$ near $1$ and for all $p$ outside a discrete set
--- thereby proving \Cref{cor:osc}.

\begin{corollary}\label{cor:const}
$\Psi_j$ is constant if and only if $\Mel_j(s_m)=0$ for every $m\in\Z\setminus\{0\}$.
\end{corollary}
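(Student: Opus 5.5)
The plan is to read the statement off the Fourier formula \eqref{eq:fourier} of \Cref{prop:fourier}, combined with the uniqueness theorem for Fourier coefficients of continuous periodic functions. By \Cref{prop:phi} and \eqref{eq:Psidef}, $\Psi_j$ is continuous and $1$-periodic, hence in $L^2[0,1]$. Its Fourier coefficients are
\[
\widehat\Psi_j(m)=\frac{2p}{a\log\lambda}\,\Mel_j(s_m),
\]
and the prefactor $2p/(a\log\lambda)$ is a nonzero constant, since $p>0$ and $\lambda>1$. So for each $m$, $\widehat\Psi_j(m)=0$ if and only if $\Mel_j(s_m)=0$.

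For the forward direction, suppose $\Psi_j\equiv c$. Then $\widehat\Psi_j(m)=c\int_0^1e^{2\pi imu}\,du=0$ for every $m\neq0$. Hence $\Mel_j(s_m)=0$ for all $m\in\Z\setminus\{0\}$.

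For the converse, suppose $\Mel_j(s_m)=0$ for all $m\ne0$. Then the continuous function $\Psi_j-\widehat\Psi_j(0)$ has all Fourier coefficients equal to zero, because the $m=0$ coefficient of the difference is $\widehat\Psi_j(0)-\widehat\Psi_j(0)=0$. By completeness of $\{e^{2\pi imu}\}_{m\in\Z}$ in $L^2[0,1]$ (Parseval), the difference vanishes almost everywhere. Since it is continuous, it vanishes identically. The sign convention $e^{+2\pi imu}$ in \eqref{eq:fourier} is harmless, as it only relabels $m\mapsto-m$. Therefore $\Psi_j\equiv\widehat\Psi_j(0)=\frac{2p}{a\log\lambda}\EE[V_j^{\rho}]$ is constant.

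There is no real obstacle here. The only thing to check is that the hypotheses of the uniqueness theorem hold, namely continuity of $\Psi_j$ (from \Cref{prop:phi}) and the validity of \eqref{eq:fourier} for every integer $m$, including negative ones. The latter holds because $\operatorname{Re}s_m=\rho+1>1$ lies in the half-plane where $\Mel_j$ is finite.
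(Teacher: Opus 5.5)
Your proposal is correct and matches the paper's proof. Both read the equivalence off the Fourier formula \eqref{eq:fourier}, whose prefactor $2p/(a\log\lambda)$ is nonzero, and then apply Fourier uniqueness to the continuous period-$1$ function $\Psi_j$.
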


\begin{proof}
$\Psi_j$ is continuous and period-$1$; by \eqref{eq:fourier} its Fourier coefficients vanish for
all $m\ne0$ iff all $\Mel_j(s_m)=0$ ($m\ne0$), and a continuous function with only the zeroth
Fourier coefficient nonzero is constant (Fourier uniqueness).
\end{proof}

For the two network models the criterion collapses to a single-particle one.

\begin{proposition}[Single-particle reduction, $j\in\{1,2\}$]\label{prop:reduce}
Let $z_m:=\rho+i\chi m$. Then $\Mel_1(s_m)=\EE[W^{z_m}]$ and $\Mel_2(s_m)=3\,\EE[W^{z_m}]$. Hence
for $j\in\{1,2\}$, $\Psi_j$ is constant iff $\EE[W^{z_m}]=0$ for all $m\ne0$, i.e.\ iff under the
tilted law $\propto W^{\rho}\,d\PP$ the fractional part $\{\log_\lambda W\}$ is uniform on $[0,1)$.
\end{proposition}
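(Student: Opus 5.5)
The claim for $j=1$ is immediate, since $V_1=W$ gives $\Mel_1(s_m)=\EE[W^{s_m-1}]=\EE[W^{z_m}]$. For $j=2$ we write $V_2=W_1+W_2$ with $W_1,W_2$ independent copies of $W$. The plan is to express $\EE[(W_1+W_2)^{z}]$ in terms of $\EE[W^{z}]$ using the branching structure. Conditioning on the first generation of the single-ancestor process gives the distributional fixed-point equation
\[
  \lambda W \eqd \begin{cases} W, & \text{with probability } q,\\ W_1+W_2, & \text{with probability } p.\end{cases}
\]
This holds because $D_{n+1}$ equals $D_n$ started from one child (probability $q$) or a sum of two independent copies (probability $p$); dividing by $\lambda^{n}$ and letting $n\to\infty$ gives it. For any complex $z$ with $\operatorname{Re}z\ge0$ such that all moments are finite, we get
\[
  \lambda^{z}\,\EE[W^{z}]=q\,\EE[W^{z}]+p\,\EE[(W_1+W_2)^{z}].
\]
Finiteness holds for $\operatorname{Re}z=\rho\in(1,2)$, since $W$ and $V_2$ have exponential moments by \Cref{lem:expmom} and Fatou.

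Next we specialize to $z=z_m=\rho+i\chi m$. Here $\lambda^{i\chi m}=e^{2\pi i m}=1$, so $\lambda^{z_m}=\lambda^{\rho}=a=1+2p$. Substituting,
\[
  p\,\EE[V_2^{z_m}]=(a-q)\,\EE[W^{z_m}]=3p\,\EE[W^{z_m}],
\]
since $a-q=1+2p-1+p=3p$. Dividing by $p$ gives $\Mel_2(s_m)=\EE[V_2^{s_m-1}]=3\,\EE[W^{z_m}]$. The key observation is that the choice $\chi=2\pi/\log\lambda$ makes $\lambda^{z_m}$ independent of $m$. The only technical point is justifying the fixed-point identity and the use of complex powers of positive variables; both are routine because $W>0$ almost surely.

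Combining this with \Cref{cor:const}, for $j\in\{1,2\}$ the function $\Psi_j$ is constant iff $\EE[W^{z_m}]=0$ for all $m\neq0$. For the last reformulation, consider the probability measure $\mathbb Q$ with $d\mathbb Q=W^{\rho}d\PP/\EE[W^{\rho}]$, and set $U=\log_\lambda W$. Then
\[
  \EE[W^{z_m}]=\EE[W^{\rho}]\,\EE_{\mathbb Q}\bigl[e^{2\pi i m U}\bigr]=\EE[W^{\rho}]\,\EE_{\mathbb Q}\bigl[e^{2\pi i m\{U\}}\bigr],
\]
because $W^{i\chi m}=e^{i\chi m\log W}=e^{2\pi i m\log_\lambda W}$. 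A law on $[0,1)$ whose Fourier coefficients vanish for all $m\neq0$ is the uniform law, and conversely. So the vanishing condition is exactly uniformity of $\{\log_\lambda W\}$ under $\mathbb Q$.

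I do not expect a real obstacle. The only care needed is rigor in deriving the fixed-point equation for $W$, which follows via almost-sure convergence along the first-generation decomposition.
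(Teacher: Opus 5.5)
Your proposal is correct and follows the paper's proof essentially step for step. It uses the same first-generation fixed point $\lambda W\eqd W_1$ (with probability $q$) or $W_1+W_2$ (with probability $p$), the same specialization $\lambda^{z_m}=a$ yielding the factor $a-q=3p$, and the same $W^{\rho}$-tilting reformulation. Your version also spells out the Fourier-uniqueness step behind the uniformity of $\{\log_\lambda W\}$, which the paper leaves implicit.
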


\begin{proof}
$\Mel_1(s_m)=\EE[W^{z_m}]$ is immediate, as $V_1=W$. For $j=2$, the distributional fixed point
$W\eqd\lambda^{-1}\sum_{i=1}^{\xi}W_i$ ($\xi=1+\Ber(p)$) gives, for $\operatorname{Re}u>0$,
\[
  M_W(u):=\EE[W^{u}]=\lambda^{-u}\bigl(qM_W(u)+pN(u)\bigr),\qquad N(u):=\EE[(W_1+W_2)^{u}],
\]
hence $N(u)=\tfrac{\lambda^{u}-q}{p}M_W(u)$. At $u=z_m$, $\lambda^{i\chi m}=e^{2\pi i m}=1$, so
$\lambda^{u}=\lambda^{\rho}=a$ and $\lambda^{u}-q=a-q=3p$; since $\Mel_2(s_m)=N(z_m)$, the claim
follows. The tilting statement is $\EE[W^{z_m}]=\EE[W^{\rho}e^{2\pi i m\log_\lambda W}]$.
\end{proof}

Thus, for $j\in\{1,2\}$, $\Psi_j$ is constant precisely under a multiplicative-lattice degeneracy
of $W$: the $W^{\rho}$-tilted law of $\{\log_\lambda W\}$ is exactly uniform. This is reminiscent
of the Karlin-McGregor/Dubuc \emph{near-constancy} phenomenon \cite{Dubuc82,BB91} and of embeddability of the
discrete process into a continuous-time branching process (which fails here: an embedding would
require a pgf $h$ with $h\circ h=f$, and $f(x)=px^2+qx$ has no such compositional square root---for a
polynomial $h$, $\deg(h\circ h)=(\deg h)^2\ne2$, while an infinite-support $h$ forces $h\circ h$ to
have infinite support by nonnegativity of coefficients \cite{KM}). We caution, however, that the classical criterion governs a
\emph{different} Mellin line, namely the Schr\"oder/left-tail Harris function
\cite{Harris63,CG13}, not the $\rho$-line
values $\EE[W^{z_m}]$ at issue here, so it does not by itself decide the behaviour at the isolated
exceptional points of $\mathcal Z_1$. Instead we reduce the question to a sharp non-tiling problem,
and prove non-degeneracy near $p=1$.

\begin{proposition}[Positive-kernel reduction and a rigid shift ratio]\label{prop:kernel}
Let $\varphi(t)=\EE e^{-tW}$ and $y(t)=1-\varphi(t)$, so the Laplace fixed point reads
\begin{equation}\label{eq:ypoinc}
  y(\lambda t)=\lambda\,y(t)-p\,y(t)^2=y(t)\bigl(\lambda-p\,y(t)\bigr).
\end{equation}
Then, for every $m\in\Z$, with $z_m:=\rho+i\chi m$ and $C(z):=1/\Gamma(-z)$,
\begin{equation}\label{eq:ysquare}
  \EE[W^{z_m}]=C(z_m)\int_0^\infty y(t)^2\,t^{-z_m-1}\,dt .
\end{equation}
Writing $t=\lambda^{v}$ and $H_p(v):=\lambda^{-\rho v}\,y(\lambda^{v})^{2}\ge0$, \eqref{eq:ysquare}
reads $\EE[W^{z_m}]=C(z_m)\log\lambda\,\widehat H_p(m)$, $\widehat H_p(m)=\int_\R H_p(v)e^{-2\pi imv}dv$;
since $C(z_m)\ne0$, \Cref{cor:const} and \Cref{prop:reduce} show that, for $j\in\{1,2\}$, $\Psi_j$
is constant iff the periodization
$\mathcal P_p(u):=\sum_{n\in\Z}H_p(u+n)$ is constant. Moreover the unit-shift ratio of $H_p$,
\begin{equation}\label{eq:ratio}
  \frac{H_p(v+1)}{H_p(v)}=\lambda^{2-\rho}\Bigl(1-\tfrac{p}{\lambda}\,y(\lambda^{v})\Bigr)^{2}
\end{equation}
is strictly decreasing in $v$, from $\lambda^{2-\rho}>1$ (as $v\to-\infty$) to $\lambda^{-\rho}<1$
(as $v\to+\infty$).
\end{proposition}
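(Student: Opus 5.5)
The plan is to verify the four assertions in order: the Laplace fixed point \eqref{eq:ypoinc}, the Mellin identity \eqref{eq:ysquare}, the equivalence with constancy of the periodization, and the shift ratio \eqref{eq:ratio}.

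\emph{Fixed point.} The distributional identity $W\eqd\lambda^{-1}\sum_{i=1}^{\xi}W_i$ (used already in \Cref{prop:reduce}) gives $\varphi(\lambda t)=f(\varphi(t))=p\varphi(t)^2+q\varphi(t)$. Substituting $\varphi=1-y$ yields $y(\lambda t)=1-p(1-y)^2-q(1-y)=(2p+q)y-py^2=\lambda y-py^2$, which is \eqref{eq:ypoinc}.

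\emph{Mellin identity.} I would start from the classical fractional-moment formula, valid for $1<\operatorname{Re}z<2$ and a positive variable with finite mean:
\[
\EE[W^{z}]=\frac{1}{\Gamma(-z)}\int_0^\infty\bigl(e^{-tW}-1+tW\bigr)t^{-z-1}\,dt .
\]
This follows from the scalar identity $x^{z}\Gamma(-z)=\int_0^\infty(e^{-tx}-1+tx)t^{-z-1}dt$, obtained by the substitution $t\mapsto t/x$, together with Tonelli, since the integrand is nonnegative. Taking expectations and using $\EE W=1$ gives
\[
\EE[W^{z}]=C(z)\,I(z),\qquad I(z):=\int_0^\infty g(t)\,t^{-z-1}\,dt,\qquad g(t):=t-y(t)\ge0 .
\]
The integral $I(z)$ converges because $g=O(t^2)$ near $0$ (by the finite second moment of $W$) and $g=O(t)$ at $\infty$.

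Next I would feed in the functional equation. From \eqref{eq:ypoinc}, $g(\lambda t)=\lambda g(t)+p\,y(t)^2$. The substitution $t\mapsto\lambda t$ in $I(z)$ then gives
\[
I(z)=\lambda^{1-z}I(z)+p\lambda^{-z}J(z),\qquad J(z):=\int_0^\infty y^2t^{-z-1}\,dt .
\]
Here $J(z)$ converges since $y^2=O(t^2)$ at $0$ and $y\le1$. Hence $I(z)=pJ(z)/(\lambda^{z}-\lambda)$. At $z=z_m$ we have $\lambda^{z_m}=\lambda^{\rho}=a$ and $a-\lambda=p$, so $I(z_m)=J(z_m)$, which is \eqref{eq:ysquare}.

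\emph{Reduction to the periodization.} Substituting $t=\lambda^{v}$ and using $\lambda^{-z_mv}=\lambda^{-\rho v}e^{-2\pi imv}$ (because $\chi\log\lambda=2\pi$) gives $\EE[W^{z_m}]=C(z_m)\log\lambda\,\widehat H_p(m)$. The factor $C(z_m)=1/\Gamma(-z_m)$ is nonzero, since $1/\Gamma$ vanishes only at $-z\in\{0,-1,-2,\dots\}$ while $\operatorname{Re}z_m=\rho\in(1,2)$.

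The function $H_p$ is continuous and decays geometrically at both ends: like $\lambda^{(2-\rho)v}$ as $v\to-\infty$, and like $\lambda^{-\rho v}$ as $v\to+\infty$. So $\mathcal P_p$ is a continuous $1$-periodic function, and by the same unfolding as in \Cref{prop:fourier} its Fourier coefficients are exactly $\widehat H_p(m)$. Fourier uniqueness combined with \Cref{cor:const} and \Cref{prop:reduce} then gives the equivalence: $\Psi_j$ is constant iff $\widehat H_p(m)=0$ for all $m\ne0$, iff $\mathcal P_p$ is constant.

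\emph{Shift ratio.} By \eqref{eq:ypoinc},
\[
\frac{H_p(v+1)}{H_p(v)}=\lambda^{-\rho}\Bigl(\frac{y(\lambda^{v+1})}{y(\lambda^{v})}\Bigr)^2=\lambda^{-\rho}\bigl(\lambda-p\,y(\lambda^{v})\bigr)^2,
\]
which is \eqref{eq:ratio}. Since $W>0$ a.s. and $W$ is nondegenerate, $y$ increases strictly from $0$ at $t=0$ to $1$ as $t\to\infty$. Moreover $1-\tfrac{p}{\lambda}y\ge1-\tfrac{p}{\lambda}>0$, so the squared factor is strictly decreasing in $v$. The limits are $\lambda^{2-\rho}$ as $v\to-\infty$ (where $y\to0$) and $\lambda^{-\rho}(\lambda-p)^2=\lambda^{-\rho}$ as $v\to+\infty$ (where $y\to1$). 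Finally $1<\rho<2$ gives $\lambda^{2-\rho}>1>\lambda^{-\rho}$.

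The only delicate point is the fractional-moment representation and its Tonelli justification. Everything else is algebra, because the factor $\lambda^{z}-\lambda$ collapses to $p$ exactly on the line $\operatorname{Re}z=\rho$.
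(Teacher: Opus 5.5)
Your proposal is correct and follows the paper's proof essentially step for step. The shared steps are the twice-subtracted Frullani representation applied to $A(t)=t-y(t)$, the scaling relation $A(\lambda t)=\lambda A(t)+p\,y(t)^2$ whose Mellin prefactor $p/(\lambda^{z}-\lambda)$ equals $1$ exactly at $z=z_m$, the substitution $t=\lambda^{v}$ with Fourier uniqueness for $\mathcal P_p$, and the shift ratio read off from \eqref{eq:ypoinc}. The extra justifications you add are sound: absolute convergence of both Mellin integrals for $1<\operatorname{Re}z<2$ (for complex $z$, apply Tonelli to the modulus $t^{-\operatorname{Re}z-1}$ and then Fubini), $1/\Gamma(-z_m)\neq0$, and $1-\tfrac{p}{\lambda}y\ge 1/\lambda>0$.
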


\begin{proof}
Equation \eqref{eq:ypoinc} is $\varphi(\lambda t)=q\varphi+p\varphi^2$ rewritten via $y=1-\varphi$
and $q+2p=\lambda$. For $1<\operatorname{Re}z<2$ the twice-subtracted Frullani identity \cite{FGD95} gives
$\EE W^{z}=C(z)\int_0^\infty(\varphi(t)-1+t)t^{-z-1}dt$ (valid as $\EE W=1$); put
$A(t):=\varphi(t)-1+t=t-y(t)$. By \eqref{eq:ypoinc}, $A(\lambda t)=\lambda A(t)+p\,y(t)^2$, so with
$I_A(z)=\int_0^\infty A(t)t^{-z-1}dt$ and $I_y(z)=\int_0^\infty y(t)^2t^{-z-1}dt$, the substitution
$t\mapsto\lambda t$ yields $I_A(z)=\lambda^{1-z}I_A(z)+p\lambda^{-z}I_y(z)$, i.e.\
$I_A(z)=\frac{p\lambda^{-z}}{1-\lambda^{1-z}}I_y(z)$. At $z=z_m$, $\lambda^{z_m}=\lambda^{\rho}=a$,
so $p\lambda^{-z_m}=p/a$ and $1-\lambda^{1-z_m}=1-\lambda/a=(a-\lambda)/a=p/a$; the prefactor is
$1$, hence $I_A(z_m)=I_y(z_m)$ and \eqref{eq:ysquare} follows. The substitution $t=\lambda^{v}$
turns $I_y(z_m)$ into $\log\lambda\int_\R\lambda^{-\rho v}y(\lambda^v)^2e^{-2\pi imv}dv=\log\lambda\,\widehat H_p(m)$.
Since $y(t)\sim t$ as $t\downarrow0$ and $y(t)\uparrow1$ as $t\to\infty$, the kernel
$H_p(v)=\lambda^{-\rho v}y(\lambda^v)^2$ satisfies $H_p(v)=O(\lambda^{(2-\rho)v})$ as $v\to-\infty$ and
$H_p(v)=O(\lambda^{-\rho v})$ as $v\to+\infty$; as $2-\rho,\rho>0$, the integer-shift series
$\mathcal P_p(u)=\sum_{n\in\Z}H_p(u+n)$ converges uniformly on $[0,1]$, so
$\mathcal P_p\in C(\mathbb T)\subset L^1(\mathbb T)$ with $\widehat{\mathcal P_p}(m)=\widehat H_p(m)$. By
Fourier uniqueness on $\mathbb T$, all $\widehat H_p(m)=0$ ($m\ne0$) iff $\mathcal P_p$ is constant. Finally \eqref{eq:ypoinc} gives
$y(\lambda^{v+1})/y(\lambda^{v})=\lambda-p\,y(\lambda^v)$, so
$H_p(v+1)/H_p(v)=\lambda^{-\rho}(\lambda-p\,y(\lambda^v))^2=\lambda^{2-\rho}(1-\tfrac p\lambda y(\lambda^v))^2$;
as $y$ increases strictly from $0$ to $1$, the factor $(1-\tfrac p\lambda y(\lambda^v))$ decreases
from $1$ to $1-p/\lambda=1/\lambda$, giving the stated monotone range
$\lambda^{2-\rho}\downarrow\lambda^{-\rho}$.
\end{proof}

\begin{remark}
The remaining open case is thus a concrete \emph{non-tiling} problem: show that the explicit
positive kernel $H_p(v)=\lambda^{-\rho v}(1-\varphi(\lambda^{v}))^{2}$, whose rigid, strictly
decreasing unit-shift ratio \eqref{eq:ratio} crosses $1$ exactly once, cannot form an exact
partition of unity under integer shifts. Ratio-monotonicity alone is not sufficient in general
(compactly supported log-concave kernels, e.g.\ $B$-splines \cite{deBoor}, do tile), so a proof must use the
full-support analyticity of $H_p$ or the Schr\"oder structure of $y$.
\end{remark}

\begin{proposition}[Non-degeneracy near $p=1$]\label{prop:near1}
For every $j\ge1$,
\[
  \lim_{p\uparrow1}\abs{\widehat\Psi_j(1)}=\frac{2}{3\log 2}\,j^{\rho_*}\ne0,\qquad
  \rho_*=\log3/\log2 .
\]
Hence there is $p_0=p_0(j)<1$ such that $\Mel_j(s_1)\ne0$ and thus $\Psi_j$ is non-constant for every
$p\in(p_0,1)$.
\end{proposition}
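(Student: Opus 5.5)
The plan is a degeneration argument. As $p\uparrow1$ the offspring law $1+\Ber(p)$ becomes the deterministic value $2$, so $W$ should concentrate at its mean $1$. Then $V_j$ concentrates at $j$, and $\Mel_j(s_1)=\EE[V_j^{\,z_1}]$ with $z_1=\rho+i\chi$ should approach $j^{z_1}$. Its modulus $\abs{j^{z_1}}=j^{\rho}$ does not vanish.

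\emph{Step 1 (parameters and prefactor).} As $p\uparrow1$ we have $\lambda\to2$, $a\to3$, $\rho\to\rho_*=\log3/\log2$ and $\chi\to2\pi/\log2$. Hence the prefactor $\frac{2p}{a\log\lambda}$ in \eqref{eq:fourier} tends to $\frac{2}{3\log2}$, and $\abs{z_1}$ stays bounded, say $\abs{z_1}\le Z$, for $p$ near $1$.

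\emph{Step 2 (concentration of $V_j$).} We have $\EE W=1$ exactly. The variance $\operatorname{Var}W=q/\lambda$ was recorded earlier; it can be re-derived by taking second moments in the fixed point $W\eqd\lambda^{-1}\sum_{i\le\xi}W_i$. Hence $\EE V_j=j$ and $\operatorname{Var}V_j=jq/\lambda\to0$. In particular $\EE V_j^2=j^2+jq/\lambda$ is bounded uniformly in $p$.

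\emph{Step 3 (comparison of complex powers).} For $x>0$, write $x^{z}-j^{z}=z\int_j^x s^{z-1}\,ds$, integrating along the positive real segment. Since $\abs{s^{z-1}}=s^{\rho-1}$ and $0<\rho-1<1$, we get $s^{\rho-1}\le1+s\le1+x+j$ on that segment. Therefore
\[
  \abs{x^{z_1}-j^{z_1}}\le Z\,\abs{x-j}\,(1+x+j).
\]
Taking expectations and applying Cauchy--Schwarz gives
\[
  \bigl|\Mel_j(s_1)-j^{z_1}\bigr|\le Z\,\bigl(\operatorname{Var}V_j\bigr)^{1/2}\bigl(\EE(1+V_j+j)^2\bigr)^{1/2}.
\]
By Step 2 the right side tends to $0$. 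Since $\abs{j^{z_1}}=j^{\rho}\to j^{\rho_*}$, combining with Step 1 yields
\[
  \abs{\widehat\Psi_j(1)}\to\frac{2}{3\log2}\,j^{\rho_*}.
\]

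\emph{Step 4 (conclusion).} The limit is strictly positive, so there is $p_0(j)<1$ such that $\widehat\Psi_j(1)\ne0$ for every $p\in(p_0,1)$. By \eqref{eq:fourier} this gives $\Mel_j(s_1)\ne0$, and then \Cref{cor:const} shows that $\Psi_j$ is non-constant.

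The only delicate point is that the integrand $x^{z_1}$ itself depends on $p$. This means a weak-convergence argument would need uniform control in $p$. The deterministic Lipschitz-type bound of Step 3, together with second moments that are uniformly bounded in $p$, avoids this issue entirely. I expect that bound to be the main thing to check carefully, in particular the inequality $s^{\rho-1}\le1+s$, which uses $1<\rho<2$.
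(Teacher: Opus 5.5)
Your proof is correct, and it uses the same degeneration strategy as the paper. Since $\operatorname{Var}W=q/\lambda\to0$, $V_j$ concentrates at $j$, so $\Mel_j(s_1)=\EE[V_j^{z_1}]$ is close to a pure power of $j$ of modulus $j^{\rho}\to j^{\rho_*}$, while the prefactor tends to $\frac{2}{3\log2}$. The two arguments differ only in how the limit is passed under the expectation.

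\emph{The paper's route.} It argues softly. It shows $V_j^{\rho+i\chi}\to j^{\rho_*+i\chi_*}$ in probability, which implicitly uses joint continuity of $(x,z)\mapsto x^{z}$. It then upgrades this to $L^1$ by uniform integrability of $\{V_j^{\rho(p)}\}$. For that it invokes the monotonicity of $\rho$, which gives $\sup_{p\in[1/2,1)}\rho(p)<2$ and hence a uniform moment bound of order $\rho(1+\delta)\le2$ from $\EE V_j^2\le j+j^2$.

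\emph{Your route.} You compare directly with the $p$-dependent target $j^{z_1(p)}$. You use the deterministic estimate $\abs{x^{z}-j^{z}}\le\abs{z}\,\abs{x-j}\,(1+x+j)$, which is valid because $0<\rho-1<1$, followed by Cauchy--Schwarz. This needs only $1<\rho<2$ and second moments. It avoids both the continuity-in-the-exponent step and the uniform-integrability step. It also gives an explicit rate, $\abs{\Mel_j(s_1)-j^{z_1}}=O\bigl((1-p)^{1/2}\bigr)$, and so in principle an explicit $p_0(j)$.

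\emph{Trade-off.} The paper's argument is less quantitative, but it would go through with only convergence in probability plus any uniform moment bound of order exceeding $\sup\rho$. Yours is more elementary and sharper. One small caveat: if you re-derive $\operatorname{Var}W$ from the fixed point $W\eqd\lambda^{-1}\sum_{i\le\xi}W_i$, you need $\EE W^2<\infty$ in advance. This follows, for example, from \Cref{lem:expmom} with Fatou, or from $L^2$-boundedness of the martingale.
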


\begin{proof}
By \Cref{prop:fourier}, $\widehat\Psi_j(1)=\tfrac{2p}{a\log\lambda}\EE[V_j^{\rho+i\chi}]$ with
$\rho=\rho(p)$, $\chi=\chi(p)$. As $p\uparrow1$, $\EE W=1$ and, since the offspring variance is
$\operatorname{Var}(\xi)=pq$, $\operatorname{Var}(W)=\tfrac{pq}{\lambda(\lambda-1)}=\tfrac{q}{\lambda}=\tfrac{1-p}{1+p}\to0$, so $W\to1$
in probability, whence $V_j=W_1+\cdots+W_j\to j$ in probability. The family
$\{V_j^{\rho(p)}\}_{p\in[1/2,1)}$ is uniformly integrable: as $\rho$ is strictly decreasing
(\Cref{sec:results}), $\bar\rho:=\sup_{p\in[1/2,1)}\rho(p)=\rho(1/2)=\log2/\log\tfrac32<2$, so we may
fix $\delta>0$ with $\bar\rho(1+\delta)\le2$; then $V_j^{\rho(p)(1+\delta)}\le1+V_j^{2}$, while
$\EE V_j^{2}=j\,q/\lambda+j^{2}\le j+j^{2}$ (as $\operatorname{Var}W=q/\lambda\le1$), so
$\sup_{p\in[1/2,1)}\EE[V_j^{\rho(p)(1+\delta)}]<\infty$. Since $\rho(p)\to\rho_*$ and
$\chi(p)\to\chi_*:=2\pi/\log2$, both finite, $V_j^{\rho+i\chi}\to j^{\rho_*+i\chi_*}$ in probability, hence in
$L^1$ by uniform integrability; so $\EE[V_j^{\rho+i\chi}]\to j^{\rho_*+i\chi_*}$, of modulus
$j^{\rho_*}$. Finally $\tfrac{2p}{a\log\lambda}\to\tfrac{2}{3\log2}$, giving the claim.
\end{proof}

For $j=1$ the limit is $\tfrac{2}{3\log2}$, and for $j=2$ it is
$\tfrac{2}{3\log2}\cdot2^{\rho_*}=\tfrac{2}{\log2}$ (as $2^{\log3/\log2}=3$).

\begin{proposition}[Real-analytic dependence on $p$]\label{prop:analytic}
For each fixed $m\in\Z$, the map $p\mapsto F_m(p):=\EE_p\bigl[W_p^{\,z_m(p)}\bigr]$, where
$z_m(p)=\rho(p)+i\chi(p)m$, is real-analytic on $(0,1)$.
\end{proposition}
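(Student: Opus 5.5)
The plan is to use the kernel representation of \Cref{prop:kernel} and to prove real-analyticity of the parameter integral. By \eqref{eq:ysquare},
\[
  F_m(p)=C(z_m(p))\int_0^\infty y_p(t)^2\,t^{-z_m(p)-1}\,dt .
\]
The exponents $\rho(p)=\log(1+2p)/\log(1+p)$ and $\chi(p)=2\pi/\log(1+p)$ are real-analytic on $(0,1)$. The values $z_m(p)$ satisfy $1<\operatorname{Re}z_m<2$, so they stay away from the poles of $\Gamma(-z)$ at the nonnegative integers. Hence $C(z_m(p))=1/\Gamma(-z_m(p))$ is real-analytic, and it remains to treat the integral. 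I will extend $p$ to a small complex disc $U$ around each $p_0\in(0,1)$. The goal is to show that $t\mapsto y_p(t)$ extends holomorphically in $p\in U$ for each $t>0$, with bounds uniform in $p\in U$. Holomorphy of the integral then follows from Morera and Fubini, or from differentiation under the integral.

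First step: construct $y_p$ analytically in $p$. I will use the linearization $y_p(t)=\lim_n\lambda^n\,T_p^{(n)}$-type formula. Concretely, from \eqref{eq:ypoinc}, $y(t)=\lim_{n}\lambda^{n}\,b$-inverse iterates fail, so I will instead use the forward Poincaré construction
\[
  y_p(t)=\lim_{n\to\infty}T_p^{(n)}\!\bigl(t\lambda^{-n}\bigr),\qquad T_p(s)=\lambda s-ps^2 .
\]
This is the standard Poincaré function of the map $T_p$ at its repelling fixed point $0$ with multiplier $\lambda$. The convergence $T^{(n)}(t\lambda^{-n})\to y(t)$ holds uniformly for $t$ in compacts. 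Moreover it persists for complex $p$ near $p_0$, because $|\lambda(p)|>1$ there and the standard Koenigs/Poincaré estimates are locally uniform in the parameter. Each approximant is a polynomial in $(p,t)$, so the limit $y_p(t)$ is holomorphic in $p\in U$, locally uniformly in $t$.

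Second step: uniform integrability bounds. Near $t=0$, $y_p(t)=t+O(t^2)$ uniformly in $p\in U$, which gives $|y_p(t)^2t^{-z-1}|\le Ct^{1-\operatorname{Re}z}$; this is integrable at $0$ since $\operatorname{Re}z<2$. For large $t$ I need $|y_p(t)|\le C$ uniformly in complex $p\in U$. For real $p$ this is clear since $0\le y\le1$. For complex $p$ this is the main obstacle, because the Poincaré function of a complex quadratic map need not stay bounded along the positive ray.

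What I would try first is to avoid complex $t$-growth issues by splitting the integral at $t=\lambda^{K}$ and using the functional equation \eqref{eq:ypoinc} to rewrite the tail. Alternatively I would work with $\varphi_p(t)=\EE_p e^{-tW_p}$ directly, expressed as $\lim_n f_p^{(n)}(e^{-t\lambda^{-n}})$. For $t$ in a compact range, with $p$ complex and close to $p_0$, the iterates stay in a small neighbourhood of the attracting basin of $0$ for $f_{p_0}$, since $f_{p_0}'(0)=q_0<1$. Then $\varphi_p(t)$ is small and $y_p$ is bounded. Large $t$ is reached via $\varphi_p(\lambda t)=f_p(\varphi_p(t))$, and $f_p$ maps a small disc around $0$ into itself for $p\in U$, which gives $|y_p(t)|\le 2$ for all $t\ge t_0$. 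With $\operatorname{Re}z>1$, the tail $\int^\infty t^{-\operatorname{Re}z-1}\,dt$ converges.

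The integrand is therefore dominated by $Ct^{1-\operatorname{Re}z}\wedge Ct^{-\operatorname{Re}z-1}$, uniformly in $p\in U$. The integral is then holomorphic on $U$, so $F_m$ is real-analytic on $(0,1)$.
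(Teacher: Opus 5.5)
\textbf{There is a genuine gap, and it is in the step you flagged: the large-$t$ bound in Step 2.} Your Step 1 is fine: $\lim_n T_p^{(n)}(t\lambda^{-n})$ is the entire Poincar\'e function $Y_p$ of $T_p(s)=\lambda s-ps^2$, and it is holomorphic in $p$ while $\abs{\lambda(p)}>1$. What fails is the claim that $\abs{y_p(t)}\le 2$ for all real $t\ge t_0$, uniformly in complex $p$. This cannot be repaired on the fixed contour $(0,\infty)$, because for every non-real $p$ near $p_0$ the integral $\int_0^\infty Y_p(t)^2t^{-z_m-1}\,dt$ diverges.

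The reason is that the relation $\varphi_p(\lambda t)=f_p(\varphi_p(t))$ only moves $t$ to $\lambda(p)t$. Starting from a compact real range, you therefore control $\varphi_p$ on the spiral $\lambda(p)^n[t_0,\lambda_0t_0]$, not on $\R_+$. To cover the whole real ray you would need $\varphi_p$ small on the full annulus $t_0\le\abs{t}\le\abs{\lambda}t_0$. That annulus contains the negative axis, where $\varphi_{p_0}(t)=\EE e^{\abs{t}W}>1$ escapes to $\infty$ under $f_{p_0}$.

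Concretely, pass to $s=\log t$ and consider the set of $s$ with $Y_p(e^s)$ in the basin of $\infty$ of $T_p$. This set is invariant under $s\mapsto s+\log\lambda(p)$ and $s\mapsto s+2\pi i$. For $p=p_0$ it contains a strip around $\operatorname{Im}s=\pi$. By stability it then contains, for $p$ near $p_0$, strips around the tilted lines $(2k+1)\pi i+\R\log\lambda(p)$. When $\operatorname{Im}\log\lambda(p)\neq0$, these strips cross the real axis infinitely often as $s\to+\infty$. On those crossings $\abs{Y_p(t)}$ grows like $\exp(c\,t^{\log2/\log\abs{\lambda}})$, so $\abs{Y_p(t)}^2t^{-\operatorname{Re}z_m-1}$ is not integrable at $\infty$.

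\textbf{The missing idea is to let the contour move with $p$.} Put $t=\lambda(p)^v$ with $v\in\R$. Then $t^{-z_m-1}\,dt=\log\lambda\;a^{-v}e^{-2\pi imv}\,dv$, and the integral becomes $\log\lambda\,\widehat H_p(m)$ as in \Cref{prop:kernel}. On this spiral the dynamics is under control. For $u\in[0,1]$, $Y_p(\lambda^{u+n})=T_p^{\,n}(Y_p(\lambda^{u}))$ converges uniformly to the attracting fixed point $1$ of $T_p$ (multiplier $q$, $\abs{q}<1$) as $n\to+\infty$. For $n=-k$ it equals $b_p^{\,k}(Y_p(\lambda^u))$, where $b_p$ is the inverse branch of $T_p$ fixing $0$, so it is $O(\theta^k)$ for some $\theta$ with $\abs{a}\theta^2<1$. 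These bounds give the summable majorants $\abs{a}^{-n}$ and $(\abs{a}\theta^2)^k$. Hence $\int_0^1e^{-2\pi imu}\sum_{n\in\Z}a^{-(u+n)}Y_p(\lambda^{u+n})^2\,du$ is holomorphic on a complex neighbourhood of $p_0$ and equals $\widehat H_p(m)$ for real $p$.

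This is the paper's proof. Your global Poincar\'e limit could replace its local power series plus $K$ forward iterates as the base curve, since $Y_p$ is entire and so $\lambda(p)^u$ never leaves its domain.
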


\begin{proof}
Fix $p_0\in(0,1)$ and a complex neighbourhood $U\ni p_0$ on which $\lambda(p)=1+p$, $a(p)=1+2p$ and
their logarithms are single-valued, with $|\lambda|>1$, $|a|>1$, $|a|<|\lambda|^2$ and $|1-p|<1$
(all strict at $p_0$). We continue the \emph{deterministic} Poincar\'e function \cite{KCG90}, not the
probabilistic Laplace transform. The series $Y_p(t)=\sum_{r\ge1}b_r(p)t^{r}$, $b_1\equiv1$, solving
$Y_p(\lambda t)=\lambda Y_p(t)-pY_p(t)^2$ has coefficients
$(\lambda^{r}-\lambda)b_r=-p\sum_{i+i'=r}b_ib_{i'}$ ($r\ge2$), each holomorphic in $p$ on $U$ (the
denominators $\lambda^{r}-\lambda$ do not vanish, as $\abs\lambda>1$). Since
$\abs{\lambda^{r}-\lambda}\ge(1-\abs\lambda^{-1})\abs\lambda^{r}\ge(1-\abs\lambda^{-1})\abs\lambda^{2}$
for $r\ge2$, the recursion gives $\abs{b_r}\le A\sum_{i+i'=r}\abs{b_i}\abs{b_{i'}}$ with
$A:=\sup_{p\in\overline U}\abs p/[(1-\abs\lambda^{-1})\abs\lambda^{2}]<\infty$, so the Catalan majorant
$\mathcal B(t)\preceq t+A\,\mathcal B(t)^2$ \cite{FlS09} bounds $\sum_r\abs{b_r}\,t^r$ on $\abs t<1/(4A)$.
Hence $Y_p(t)$ is holomorphic in $(p,t)$ on $U\times \mathbb D(0,\varepsilon)$ for a common
$\varepsilon>0$; for real $p$,
$Y_p(t)=y_p(t):=1-\varphi_p(t)$ on $\abs t<\varepsilon$: by \Cref{lem:expmom} and Fatou
$\EE e^{\theta W}\le M<\infty$, so $\varphi_p(t)=\EE e^{-tW}$ is analytic near $0$; $y_p=1-\varphi_p$
satisfies $y_p(\lambda t)=\lambda y_p(t)-py_p(t)^2$ (\eqref{eq:ypoinc}) with $y_p(0)=0$ and
$y_p'(0)=\EE W=1$, which is exactly the normalization and recursion defining $Y_p$, so $Y_p=y_p$ near
$0$, hence, as identical power series, on all of $\abs t<\varepsilon$. Let $g_p(z)=\lambda z-pz^2$, with fixed points $0$ (multiplier $g_p'(0)=\lambda$) and $1$
(multiplier $g_p'(1)=\lambda-2p=q$), and let $b_p(z)=\tfrac{\lambda-\sqrt{\lambda^2-4pz}}{2p}$ be its
inverse branch fixing $0$ ($b_p(0)=0$, $b_p'(0)=\lambda^{-1}$). As $\lambda(p_0)^2/(4p_0)=(1+p_0)^2/(4p_0)>1$,
fix a real radius $R$ with $1<R<(1+p_0)^2/(4p_0)$ and shrink $U$ to a relatively compact disk with
$\inf_{p\in\overline U}\abs{\lambda(p)^2/(4p)}>R$; then $\lambda^2-4pz\ne0$ on $U\times\{\abs z<R\}$, and,
taking the square root equal to $\lambda$ at $z=0$, $b_p(z)$ is jointly holomorphic on the
simply-connected disk $\{\abs z<R\}$, which contains $(0,1)$ and, after a further shrinking of $U$, the
whole backward orbit constructed below.

\emph{A base curve on a fundamental domain.} The disk $\mathbb D(0,\varepsilon)$ need not contain
$\{\lambda^{u}:u\in[0,1]\}$, so we reach that segment by iteration. Fix an integer $K$ with
$\sup_{u\in[0,1],\,p\in\overline U}\abs{\lambda^{\,u-K}}<\varepsilon$ (possible as $\abs\lambda>1$ on $\overline U$) and put
\[
  \Upsilon_p(u):=g_p^{\,K}\bigl(Y_p(\lambda^{\,u-K})\bigr)\qquad(u\in[0,1]),
\]
holomorphic in $p\in U$ and jointly continuous in $(p,u)$ (as $\lambda^{u-K}\in\mathbb D(0,\varepsilon)$
and $g_p$ is a polynomial); for real $p$, the functional equation $g_p(y_p(t))=y_p(\lambda t)$ gives
$\Upsilon_p(u)=y_p(\lambda^{u})$. Set
\[
  Y_{p,n}:=g_p^{\,n}\!\circ\Upsilon_p\ (n\ge0),\qquad Y_{p,-k}:=b_p^{\,k}\!\circ\Upsilon_p\ (k\ge1),
\]
so that, for real $p$, $Y_{p,n}(u)=y_p(\lambda^{\,u+n})$ for every $n\in\Z$.

\emph{Uniform basin estimates.} At $p_0$ the curve $\Upsilon_{p_0}([0,1])=y_{p_0}([1,\lambda])$ is a
compact subset of $(0,1)$, on which $g_{p_0}$ increases $(0,1)$ into itself with
$g_{p_0}(z)-z=pz(1-z)>0$, and $b_{p_0}$ likewise with $b_{p_0}(z)<z$; hence
$g_{p_0}^{\,n}\!\circ\Upsilon_{p_0}\to1$ and $b_{p_0}^{\,k}\!\circ\Upsilon_{p_0}\to0$ uniformly on
$[0,1]$. Fix a constant $\theta$ with $\sup_{p\in\overline U}\abs{\lambda(p)}^{-1}<\theta<\inf_{p\in\overline U}\abs{a(p)}^{-1/2}$
(a nonempty range once $U$ is small, as $\abs{a(p_0)}<\abs{\lambda(p_0)}^2$), and closed disks
$\overline{\mathbb D}(1,r_1)$, $\overline{\mathbb D}(0,r_0)\subset\{\abs z<R\}$ on which,
after shrinking $U$, $g_p$ is a contraction (factor $<1$, as $\abs q<1$) and $b_p$ a contraction with
factor $\le\theta$ (as $\abs{b_p'(0)}=\abs{\lambda}^{-1}<\theta$), uniformly in $p\in U$; by
the uniform convergence at $p_0$ and continuity in $p$ there are $N_1,N_0$ and a smaller $U$ with
$g_p^{\,N_1}\Upsilon_p([0,1])\subset\overline{\mathbb D}(1,r_1)$ and
$b_p^{\,N_0}\Upsilon_p([0,1])\subset\overline{\mathbb D}(0,r_0)$ for all $p\in U$, so all subsequent backward
iterates ($k\ge N_0$) stay in $\overline{\mathbb D}(0,r_0)$, the whole orbit remaining in $\{\abs z<R\}$, where $b_p$ is holomorphic. Consequently
$\abs{Y_{p,n}(u)}\le M$ for all $n\ge0$ (the finitely many $n<N_1$ bounded on the compact
$\overline U\times[0,1]$) and $\abs{Y_{p,-k}(u)}\le C\theta^{\,k}$, both uniformly in $u\in[0,1]$, $p\in U$; whence
$\abs{a^{-(u+n)}Y_{p,n}^2}\le M'\abs a^{-n}$ and $\abs{a^{-(u-k)}Y_{p,-k}^2}\le C'(\abs a\theta^{2})^{k}$
with $\abs a>1$ and $\abs a\theta^{2}<1$ (by $\theta<\inf_{p\in\overline U}\abs{a(p)}^{-1/2}$). Both tails
therefore converge locally uniformly in $p\in U$. Thus
\[
  A_m(p):=\int_0^1 e^{-2\pi imu}\sum_{n\in\Z}a(p)^{-(u+n)}Y_{p,n}(u)^2\,du
\]
is holomorphic on $U$ (each summand is continuous in $(p,u)$ and holomorphic in $p$; the locally
uniform convergence keeps the sum holomorphic in $p$, and Morera's theorem with Fubini's theorem show
that integration over the compact $[0,1]$ preserves holomorphy). For real $p$, $a^{-(u+n)}Y_{p,n}(u)^2=H_p(u+n)$, so $A_m(p)=\widehat H_p(m)$.
By \Cref{prop:kernel}, $F_m(p)=C(z_m(p))\,\log\lambda(p)\,A_m(p)$ with $C(z_m)\ne0$ and the prefactor
holomorphic, so $F_m$ is real-analytic near $p_0$.
\end{proof}

\begin{corollary}[Non-degeneracy off a discrete set]\label{cor:genericp}
There is a discrete set $\mathcal Z_1\subset(0,1)$ such that, for both $j\in\{1,2\}$, $\Psi_j$ is
non-constant for every $p\in(0,1)\setminus \mathcal Z_1$.
\end{corollary}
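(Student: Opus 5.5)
The plan is to combine the analytic dependence of \Cref{prop:analytic} with the non-vanishing near $p=1$ from \Cref{prop:near1}, using the identity theorem for real-analytic functions on the interval $(0,1)$.

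First, take $m=1$ and consider $F_1(p)=\EE_p[W_p^{z_1(p)}]$. By \Cref{prop:analytic}, $F_1$ is real-analytic on $(0,1)$. For $j=1$, \Cref{prop:reduce} gives $\Mel_1(s_1)=F_1(p)$. By \Cref{prop:fourier},
\[
  \widehat\Psi_1(1)=\frac{2p}{a\log\lambda}\,F_1(p).
\]
\Cref{prop:near1} with $j=1$ shows that $\abs{\widehat\Psi_1(1)}\to\frac{2}{3\log2}\neq0$ as $p\uparrow1$. Since the prefactor $\frac{2p}{a\log\lambda}$ is finite and nonzero on $(0,1)$, $F_1$ does not vanish identically near $1$. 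Because $(0,1)$ is connected, the identity theorem then implies that $F_1\not\equiv0$ on $(0,1)$. Its zero set $\mathcal Z_1:=\{p\in(0,1):F_1(p)=0\}$ therefore has no accumulation point inside $(0,1)$, so it is discrete.

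Next, fix $p\notin\mathcal Z_1$. Then $F_1(p)\neq0$. For $j=2$, \Cref{prop:reduce} gives $\Mel_2(s_1)=3F_1(p)\neq0$, and for $j=1$ we have $\Mel_1(s_1)=F_1(p)\neq0$. In both cases \Cref{cor:const} shows that $\Psi_j$ is non-constant, because its first Fourier coefficient does not vanish. Thus the single set $\mathcal Z_1$ serves both $j=1$ and $j=2$.

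The only delicate point is the identity-theorem step. Here we need that a real-analytic function on a connected open interval that is not identically zero has isolated zeros, and that nonvanishing somewhere in $(p_0,1)$ rules out $F_1\equiv0$. Both facts are standard, and \Cref{prop:analytic} and \Cref{prop:near1} supply exactly these inputs, so no further obstacle remains.
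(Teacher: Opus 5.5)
Your proposal is correct and follows the paper's proof essentially step for step. You get real-analyticity of $F_1$ from \Cref{prop:analytic}, show $F_1$ does not vanish near $p=1$, apply the identity theorem to get a discrete zero set $\mathcal Z_1$, and then conclude for both $j\in\{1,2\}$ via \Cref{prop:reduce} and \Cref{cor:const}. The only cosmetic difference is that the paper asserts $F_1(p)\to1$ directly by the uniform-integrability argument, whereas you extract the same limit $\abs{F_1(p)}\to1$ from \Cref{prop:near1} at $j=1$ by dividing out the nonzero prefactor $\frac{2p}{a\log\lambda}$.
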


\begin{proof}
$F_1(p)=\EE_p[W^{z_1}]$ is real-analytic (\Cref{prop:analytic}) and $F_1(p)\to1$ as $p\uparrow1$
(as $W\to1$ in probability with $\{W^{\rho}\}$ uniformly integrable), so $F_1\not\equiv0$ and its
zero set $\mathcal Z_1=\{p:F_1(p)=0\}$ is discrete. For $p\notin \mathcal Z_1$ and $j\in\{1,2\}$, \Cref{prop:reduce}
gives $\Mel_j(s_1)=c_j F_1(p)\ne0$ ($c_1=1$, $c_2=3$), so $\Psi_j$ is non-constant.
\end{proof}

Combining \Cref{cor:genericp,prop:near1} gives \Cref{cor:osc}: for $j\in\{1,2\}$, $\Psi_j$ is
non-constant for every $p\in(0,1)\setminus \mathcal Z_1$, and, for every $j\ge1$, on a whole
neighbourhood of $p=1$. Whenever $\Psi_j$ is non-constant, $p_k$ is asymptotic to no constant
multiple of $k^{-1-\rho}$: for any $u\in[0,1)$ the integers $k_r=\lceil\lambda^{\,r+u}\rceil$ satisfy
$\{\log_\lambda k_r\}\to u$, so $\{\log_\lambda k\}_{k\ge1}$ is dense in $[0,1)$ and, by
\eqref{eq:sharpmain} and the continuity of $\Psi_j$, the values $p_k\,k^{1+\rho}$ are dense in the
nondegenerate interval $\Psi_j([0,1])$. Settling the behaviour at the isolated exceptional points of
$\mathcal Z_1$ would require some $m\ge2$ with $\Mel_j(s_m)\ne0$, equivalently a proof that the
rigid Schr\"oder kernel $H_p$ of \Cref{prop:kernel} does not tile. This remains open.

\begin{remark}
Consequently, while a genuine equivalence $p_k\sim C\,k^{-1-\rho}$ with constant $C$ holds for no
$p$ near $1$, and (for $j\in\{1,2\}$) for no $p\in(0,1)\setminus\mathcal Z_1$, the logarithmic average
of $k^{1+\rho}p_k$ still converges to the mean $\overline\Psi_j=\widehat\Psi_j(0)$:
\[
  \frac1{\log K}\sum_{k=1}^{K}\frac{k^{1+\rho}p_k}{k}\xrightarrow[K\to\infty]{}\int_0^1\Psi_j(u)\,du=\overline\Psi_j .
\]
Indeed $k^{1+\rho}p_k=\Psi_j(\log_\lambda k)+o(1)$ by \eqref{eq:sharpmain}, and, grouping
$k\in[\lambda^{m},\lambda^{m+1})$ and using the period-$1$ structure and uniform continuity of
$\Psi_j$, the $1/k$-weighted blocks form Riemann sums of $\int_0^1\Psi_j$, while
$\sum_{k\le K}1/k\sim\log K$ normalizes the average and the $o(1)$ term contributes $o(1)$. The correct
pointwise statement remains the oscillatory \eqref{eq:sharpmain}.
\end{remark}

\paragraph{Scope of the results.} Every result above is proved from the local estimates of
\Cref{sec:local} together with \Cref{lem:cf,lem:llt}; the only external inputs are the
almost-sure martingale convergence $\lambda^{-n}D_n\to W$ with $\EE W=1$ (Kesten-Stigum \cite{KS66}),
textbook Fourier analysis (inversion and uniqueness) and, for the strict positivity of $\Psi_j$, the classical
positivity $w>0$ of the Schr\"oder-case density. The existence and Fourier form
\eqref{eq:sharpmain}--\eqref{eq:fourier} of $\Psi_j$ hold for every $j\ge1$, as does its
non-constancy for $p$ near $1$ (\Cref{prop:near1}). The stronger conclusion of \Cref{cor:osc}
asserts non-constancy for all $p$ outside a discrete set $\mathcal Z_1$ and rests on the single-particle
reduction \Cref{prop:reduce} and is therefore established only for the two network models
$j\in\{1,2\}$; the behaviour at the isolated points of $\mathcal Z_1$ is left open.

\section*{Acknowledgements}
The authors thank Yuliang Chen of the School of the Gifted Young, University of Science and
Technology of China, for using the Eureka system to produce a preliminary version of the proof.
Eureka is a multi-agent system for resolving mathematical conjectures through human-AI interaction.
This work is supported by the Fundamental and Interdisciplinary Disciplines Breakthrough Plan of the
Ministry of Education of China.

\end{document}